\newtheorem{theorem}{Theorem}
\newtheorem{proposition}{Proposition}
\def \endproof {\hfill $\Box$ }
\newcommand{\old}[1]{{}}
\newcommand{\later}[1]{{}}
\newcommand{\ie}{{i.e.}}
\newcommand{\eg}{{e.g.}}
\newcommand{\diam}{{\rm diam}}
\newcommand{\per}{{\rm per}}
\newcommand{\esc}{{\rm esc}}
\newcommand{\proj}{{\rm proj}}
\newcommand{\area}{{\rm area}}
\newcommand{\OPT}{{\rm OPT}}
\def\NN{\mathbb{N}}
\def\RR{\mathbb{R}}
\def\SH{\mathbb{S}}
\title{On the Total Perimeter of Homothetic Convex Bodies \\
in a Convex Container\thanks{A preliminary version of this paper
  appeared in \emph{Proceedings of the 17th International Workshop on
    Approximation Algorithms for Combinatorial Optimization Problems}
(APPROX 2013), Berkeley, CA, 2013, LNCS~8096, pp.~96--109.}}
\author{
Adrian Dumitrescu\thanks{Department of Computer Science,
University of Wisconsin--Milwaukee, WI, USA.
Email:~\texttt{dumitres@uwm.edu}.
Supported in part by NSF grant DMS-1001667.}
\qquad
Csaba D. T\'oth\thanks{Department of Mathematics,
California State University, Northridge, Los Angeles, CA, USA; and
Department of Computer Science, Tufts University, Medford, MA, USA.
Email:~\texttt{cdtoth@acm.org}.}
}
\begin{document}
\maketitle

\begin{abstract}
For two planar convex bodies, $C$ and $D$, consider a packing $S$ of
$n$ positive homothets of $C$ contained in $D$. We estimate the total
perimeter of the bodies in $S$, denoted $\per(S)$, in terms of $\per(D)$ and $n$.
When all homothets of $C$ touch the boundary of the container $D$, we show
that either $\per(S)=O(\log n)$ or $\per(S)=O(1)$, depending on how
$C$ and $D$ ``fit together,'' and these bounds are the best possible
apart from the constant factors. Specifically, we establish an optimal bound
$\per(S)=O(\log n)$ unless $D$ is a convex polygon and every side of $D$ is
parallel to a corresponding segment on the boundary of $C$ (for short, $D$
is \emph{parallel to} $C$).
%
When $D$ is parallel to $C$ but the homothets of $C$ may lie anywhere in $D$,
we show that $\per(S)=O((1+\esc(S)) \log n/\log \log n)$, where $\esc(S)$
denotes the total distance of the bodies in $S$ from the boundary of $D$.
Apart from the constant factor, this bound is also the best possible.

\medskip
\noindent
\textbf{\small Keywords}:
convex body,
perimeter,
maximum independent set,
homothet,
Ford disks,
traveling salesman,
approximation algorithm.

\end{abstract}

\section{Introduction} \label{sec:intro}

A finite set $S=\{C_1,\ldots ,C_n\}$ of convex bodies is a
\emph{packing} in a convex body (\emph{container}) $D \subset \RR^2$
if the bodies $C_1,\ldots ,C_n\in S$ are contained in $D$
and they have pairwise disjoint interiors. The term \emph{convex body}
above refers to a compact convex set with nonempty interior in $\RR^2$.
The perimeter of a convex body $C\subset \RR^2$ is denoted $\per(C)$,
and the total perimeter of a packing $S$ is denoted
$\per(S)=\sum_{i=1}^n \per(C_i)$.
Our interest is estimating $\per(S)$ in terms of $n$. In this paper,
we consider packings $S$ that consist of positive homothets of a
convex body $C$. We start with an easy general bound for this case.

\begin{proposition}\label{pro:general}
For every pair of convex bodies, $C$ and $D$,
and every packing $S$ of $n$ positive homothets of $C$ in $D$,
we have $\per(S) \leq \rho(C,D) \sqrt{n}$, where $\rho(C,D)$
depends on $C$ and $D$. Apart from this multiplicative constant,
this bound is the best possible.
\end{proposition}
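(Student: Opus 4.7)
My plan is to reduce the statement to a clean optimization problem via an area budget and then apply Cauchy--Schwarz. Let the packing $S = \{C_1,\dots,C_n\}$ consist of homothetic copies of $C$ with scaling factors $\lambda_1,\dots,\lambda_n > 0$, so that $\per(C_i) = \lambda_i\per(C)$ and $\area(C_i) = \lambda_i^2\area(C)$. Since the interiors are pairwise disjoint and all $C_i$ lie in $D$, the area constraint gives $\sum_i \lambda_i^2 \le \area(D)/\area(C)$. Applying Cauchy--Schwarz,
\[
\per(S) \;=\; \per(C)\sum_{i=1}^n \lambda_i \;\le\; \per(C)\,\sqrt{n}\,\sqrt{\sum_{i=1}^n \lambda_i^2} \;\le\; \per(C)\sqrt{\tfrac{\area(D)}{\area(C)}}\;\sqrt{n},
\]
which is the required estimate with $\rho(C,D)=\per(C)\sqrt{\area(D)/\area(C)}$.

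\textbf{Lower bound construction.} For tightness, I want to exhibit, for every $n$, a packing $S$ of $n$ homothets of $C$ inside $D$ with $\per(S)=\Omega(\sqrt{n})$, where the implicit constant depends on $C$ and $D$. The idea is to use $n$ \emph{congruent} homothets of $C$ with a common scaling factor $\lambda = \Theta(1/\sqrt{n})$. To place them, I would first pick a small axis-aligned square $Q\subset D$ (which exists because $D$ has nonempty interior), and inside $Q$ lay out a regular grid of translates of $\lambda C$ with spacing proportional to $\lambda\diam(C)$; this produces $\Omega(\area(Q)/(\lambda\diam(C))^2)=\Omega(n)$ pairwise disjoint homothets for an appropriate choice of $\lambda=\Theta(1/\sqrt{n})$. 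Retaining exactly $n$ of these, the total perimeter equals $n\cdot \lambda \per(C)=\Omega(\sqrt{n})$.

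\textbf{Main obstacle and remarks.} The upper bound is essentially a one-line Cauchy--Schwarz argument once the area budget is written down; the only subtlety is packaging the constant $\rho(C,D)$ so that it truly depends only on $C$ and $D$. The real content sits in the lower bound, where the (minor) obstacle is to justify that one can fit $\Omega(n)$ pairwise-disjoint translates of $\lambda C$ inside $D$ for $\lambda=\Theta(1/\sqrt{n})$. I plan to avoid invoking any deep packing-density theorem by working inside an inscribed square and using a trivial axis-aligned grid whose cells are large enough (a constant multiple of $\diam(\lambda C)$) to guarantee disjointness; this makes the argument elementary and self-contained, and the dependence on $C$ and $D$ is transparent.
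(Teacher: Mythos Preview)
Your proposal is correct and matches the paper's proof essentially line for line: the upper bound is the same area-budget plus Cauchy--Schwarz argument with the identical constant $\rho(C,D)=\per(C)\sqrt{\area(D)/\area(C)}$, and the lower bound is the same grid-of-congruent-copies construction inside an inscribed square (the paper phrases it as subdividing a maximal inscribed square $U$ into $\lceil\sqrt{n}\rceil^2$ congruent cells and placing one scaled copy of $C$ per cell). There is nothing to add.
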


Our goal is to substantially improve the dependence of $\per(S)$ on $n$
in two different scenarios, motivated by applications to the traveling salesman
problem with neighborhoods (TSPN). In Sections~\ref{sec:disks}--\ref{sec:boundary},
we prove tight bounds on $\per(S)$ in terms of $n$ when all homothets in $S$ touch the
boundary of the container $D$ (see Fig.~\ref{fig:packing}). In Section~\ref{sec:escape},
we prove tight bounds on $\per(S)$ in terms of $n$ \emph{and} the total distance
of the bodies in $S$ from the boundary of $D$. Specifically, for two convex bodies,
$C \subset D \subset \RR^2$, let the \emph{escape distance} $\esc(C)$ be the
distance between $C$ and the boundary of $D$ (Fig.~\ref{fig:squares}, right);
and for a packing $S=\{C_1,\ldots,C_n\}$ in a container $D$,
let $\esc(S)=\sum_{i=1}^n \esc(C_i)$.

\begin{figure}[htbp]
\centering
\includegraphics[width=.75\columnwidth]{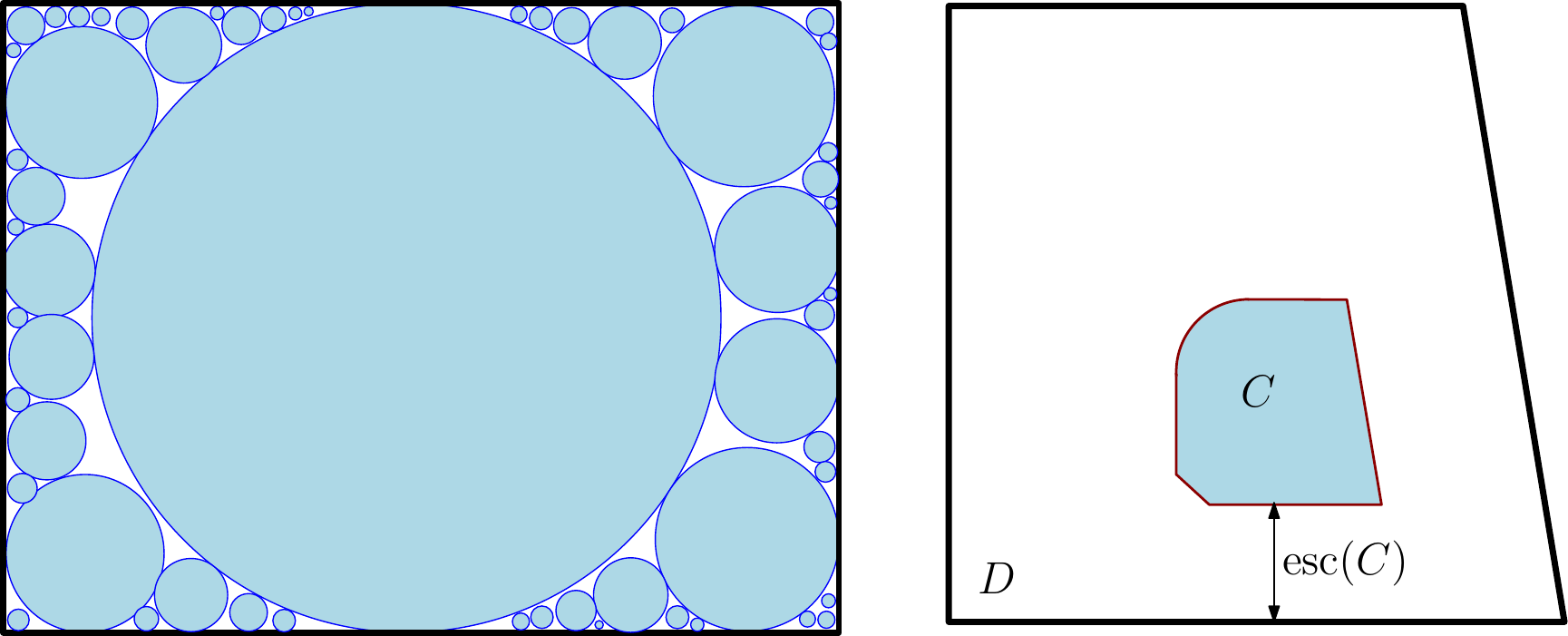}
\caption{Left: a packing of disks in a rectangle container, where all disks
touch the boundary of the container.
Right: a convex body $C$ in the interior of a trapezoid $D$ at
distance $\esc(C)$ from the boundary of $D$. The trapezoid $D$ is
\emph{parallel to} $C$: every side of $D$ is parallel and
``corresponds'' to a side of $C$.}
\label{fig:packing}
\end{figure}

\paragraph{Homothets touching the boundary of a convex container.}
We would like to bound $\per(S)$ in terms of $\per(D)$ and $n$
when all homothets in $S$ touch the boundary of $D$ (see Fig.~\ref{fig:packing}).
Specifically, for a pair of convex bodies, $C$ and $D$,
let $f_{C,D}(n)$ denote the maximum perimeter $\per(S)$ of a packing
of $n$ positive homothet of $C$ in the container $D$, where each element
of $S$ touches the boundary of $D$. We would like to estimate the growth
rate of $f_{C,D}(n)$ as $n$ goes to infinity. We prove a
logarithmic\footnote{Throughout this paper, $\log x$ denotes the logarithm of $x$ to base 2.}
 upper bound $f_{C,D}(n)=O(\log n)$ for every pair of convex bodies, $C$ and $D$.

\begin{proposition}\label{pro:boundary}
For every pair of convex bodies, $C$ and $D$,
and every packing $S$ of $n$ positive homothets of $C$ in $D$,
where each element of $S$ touches the boundary of $D$, we have
$\per(S) \leq \rho(C,D) \log n$, where $\rho(C,D)$ depends on $C$ and~$D$.
\end{proposition}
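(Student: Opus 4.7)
The plan is to sort the homothets by scale, show that at most $O(2^k)$ of them can fall into the $k$-th dyadic scale class, and then sum a harmonic series. Write each $C_i = r_i\, C + t_i$, so that $\per(C_i) = r_i \, \per(C)$, and observe that the containment $C_i \subseteq D$ forces $r_i \leq R_0 := \diam(D)/\diam(C)$. For $k = 0, 1, 2, \ldots$, define the dyadic scale class $S_k = \{C_i \in S : R_0\, 2^{-k-1} < r_i \leq R_0\, 2^{-k}\}$, so every homothet of $S$ lies in exactly one $S_k$.

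The main geometric step is the packing bound $|S_k| \leq c(C,D) \cdot 2^k$. Every $C_i \in S_k$ has diameter at most $w_k := R_0\, 2^{-k}\, \diam(C)$ and touches $\partial D$, so $C_i$ is contained in the inner $w_k$-strip $N_k = \{x \in D : d(x, \partial D) \leq w_k\}$. Since $D$ is convex, Steiner's formula for inner parallel bodies gives $\area(N_k) \leq w_k \cdot \per(D)$. On the other hand, each element of $S_k$ has area at least $(R_0\, 2^{-k-1})^2 \, \area(C)$, and the interiors of the $C_i$ are pairwise disjoint. Dividing the total available area by the minimum area per body yields $|S_k| \leq c(C,D) \cdot 2^k$ with the constant depending only on $C$ and $D$.

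With this bound in hand, the number of bodies of scale exceeding $R_0\, 2^{-k}$ is at most $\sum_{j<k} |S_j| = O(2^k)$, so the $i$-th largest scale satisfies $r_i = O(R_0/i)$. Summing,
\[
\per(S) \;=\; \per(C) \sum_{i=1}^n r_i \;\leq\; \per(C) \cdot O\!\left(R_0 \sum_{i=1}^n \frac{1}{i}\right) \;=\; O(\log n),
\]
with the implicit constant depending on $C$ and $D$, as required. The bodies smaller than $R_0/n$ (if any) contribute a total scale of at most $n \cdot R_0/n = O(1)$, which is absorbed.

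The step I expect to need the most care is the strip-area inequality $\area(N_k) \leq w_k\, \per(D)$. This follows from the fact that for a planar convex body $D$, the inner parallel body $D_{-t} := \{x \in D : B(x,t) \subseteq D\}$ is convex and has perimeter at most $\per(D)$, so $\area(D) - \area(D_{-t}) = \int_0^t \per(D_{-s})\, ds \leq t\, \per(D)$; this is the only place the convexity of $D$ enters and must be stated cleanly so the argument applies uniformly whether $\partial D$ is smooth, polygonal, or mixed.
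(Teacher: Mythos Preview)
Your proof is correct and follows essentially the same approach as the paper: a dyadic decomposition by scale, confinement of each class $S_k$ to an inner strip of width $\Theta(2^{-k})$ around $\partial D$, the area bound $\area(N_k)\le w_k\,\per(D)$, and a volume argument yielding $|S_k|=O(2^k)$. The only cosmetic difference is the final summation---the paper observes directly that $\per(S_k)=O(1)$ and sums over $\lceil\log n\rceil$ classes, whereas you rephrase the same bound as $r_i=O(1/i)$ and sum the harmonic series; both are equivalent, and your added care with the Steiner-type inequality for the strip area is a nice touch that the paper leaves implicit.
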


The upper bound $f_{C,D}(n)=O(\log n)$ is asymptotically tight for
some pairs $C$ and $D$, and not so tight for others. For example, it is not
hard to attain an $\Omega(\log n)$ lower bound when $C$ is an axis-aligned
square, and $D$ is a triangle (Fig.~\ref{fig:squares}, left). However,
$f_{C,D}(n)=\Theta(1)$ when both $C$ and $D$ are axis-aligned squares.
We start by establishing a logarithmic lower bound in the
simple setting where $C$ is a circular disk and $D$ is a unit square.
\begin{theorem}\label{thm:disks}
The total perimeter of $n$ pairwise disjoint disks lying in
the unit square $U=[0,1]^2$ and touching the boundary of $U$ is
$O(\log{n})$. Apart from the constant factor, this bound is the best
possible.
\end{theorem}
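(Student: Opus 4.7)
My plan is as follows. The upper bound in Theorem~\ref{thm:disks} is a special case of Proposition~\ref{pro:boundary} (with $C$ a disk and $D=U$), so I would invoke that. For a self-contained argument, a short dyadic-scale packing argument suffices. First, assign each disk to one side of $U$ that it touches, which costs a factor of $4$. For the group tangent to the bottom side, partition by radius into $S_k=\{D_i:2^{-k-1}<r_i\le 2^{-k}\}$. A disk of radius $r$ tangent to the bottom has center $(x,r)$ with $x\in[r,1-r]$, and for two interior-disjoint such disks of radii $r_1,r_2$ the Pythagorean inequality $(x_1-x_2)^2+(r_1-r_2)^2\ge(r_1+r_2)^2$ yields $|x_1-x_2|\ge 2\sqrt{r_1r_2}$. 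Thus two disks in $S_k$ have horizontal center distance strictly greater than $2^{-k}$, so $|S_k|\le 2^k+1$, and the perimeter contribution of $S_k$ is at most $2\pi(2^k+1)\cdot 2^{-k}=O(1)$. Maximizing $\sum_k|S_k|\cdot 2^{-k}$ subject to $|S_k|\le c\cdot 2^k$ and $\sum_k|S_k|=n$ is achieved by filling the low-$k$ scales first, exhausting scales $k=0,\ldots,K$ with $K=\Theta(\log n)$, which gives the desired $O(\log n)$.

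The real content is the matching lower bound, for which I would use the classical Ford circles: for every coprime pair $(p,q)$ with $1\le q\le Q$ and $0\le p\le q$, place a disk of radius $1/(2q^2)$ centered at $(p/q,\,1/(2q^2))$. Each such disk is tangent to the bottom side of $U$ and contained in $U$, and the identity $|p_1/q_1-p_2/q_2|\ge 1/(q_1q_2)$ for distinct reduced fractions, substituted into the Pythagorean relation above, shows that any two of these disks are tangent or interior-disjoint. The number of disks is $n=\Theta(Q^2)$ (Farey-sequence counts), and the total perimeter is $\pi\sum_{(p,q)}1/q^2=\Theta(\log Q)=\Theta(\log n)$, using the standard estimate $\sum_{q\le Q}\phi(q)/q^2=\Theta(\log Q)$.

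The main obstacle is not really a difficulty but careful bookkeeping: one must explicitly verify disjointness of the Ford circles (the Pythagorean check is straightforward once the denominator inequality is in hand), and one must argue that the scale-by-scale maximization in the upper bound is indeed extremized by filling the largest-radius scales first --- a one-line rearrangement argument. A minor subtlety in the upper bound is avoiding double-counting disks tangent to two sides of $U$, which is handled by the initial assignment step.
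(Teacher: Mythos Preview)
Your proposal is correct. For the upper bound you invoke Proposition~\ref{pro:boundary} exactly as the paper does; your self-contained separation argument along the tangent side (using $|x_1-x_2|\ge 2\sqrt{r_1r_2}$) is a clean variant of the area argument in the proof of Proposition~\ref{pro:boundary}, yielding the same $|S_k|=O(2^k)$ bound and the same dyadic summation.

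For the lower bound, the paper gives two constructions: an explicit recursive one (Subsection~\ref{ssec:explicit}) and one based on Ford circles reached via a greedy packing (Subsection~\ref{ssec:greedy}). Your argument is closest to the second but is in fact more direct: the paper first analyzes a greedy packing $S_n$ in $U$, then compares $\per(S_n)$ to the Ford-disk families $F_n(r_1,r_2)$ through monotonicity inequalities before invoking $\sum_{q\le Q}\varphi(q)/q^2=\Theta(\log Q)$, whereas you place the Ford circles themselves inside $U$ and count. One small slip to fix: the two Ford disks at $p/q\in\{0,1\}$ (radius $1/2$, centers $(0,\tfrac12)$ and $(1,\tfrac12)$) are \emph{not} contained in $U=[0,1]^2$; restricting to $q\ge 2$ (equivalently $0<p<q$) removes them at no asymptotic cost and makes your containment claim correct.
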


We determine $f_{C,D}(n)$ up to constant factors for all pairs of
convex bodies of bounded description complexity\footnote{A planar set
has \emph{bounded description complexity} if its boundary consists of
a finite number of algebraic curves of bounded degrees.}.
We show that either $f_{C,D}=\Theta(\log n)$ or $f_{C,D}(n)=\Theta(1)$
depending on how $C$ and $D$ ``fit together''. To distinguish these
cases, we need the following definitions.

\begin{figure}[htbp]
\centering
\includegraphics[width=.55\columnwidth]{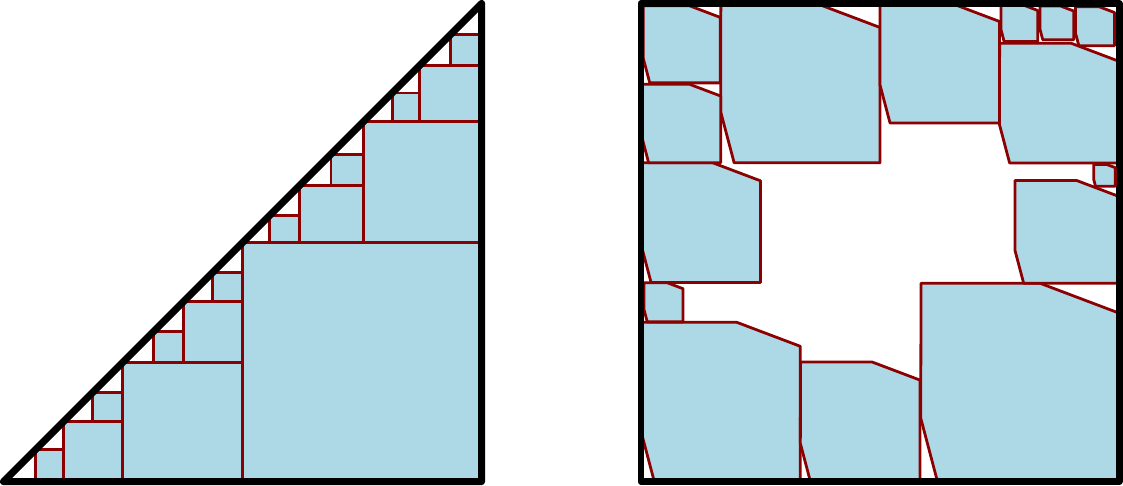}
\caption{Left: a square packing in a triangle where every square
  touches the boundary of the triangle.
Right: a packing of homothetic hexagons $H$ in a square $U$, where $U$
is parallel to $H$ and every hexagon touches the boundary of $U$.}
\label{fig:squares}
\end{figure}

\paragraph{Definition of ``parallel'' convex bodies.}
For a direction vector $\mathbf{d}\in \SH$ and a convex body $C$,
the \emph{supporting line} $\ell_{\mathbf{d}}(C)$ is a directed line of
direction $\mathbf{d}$ such that $\ell_{\mathbf{d}}(C)$ is tangent to
$C$, and the closed halfplane on the left of $\ell_{\mathbf{d}}(C)$
contains $C$. If $\ell_{\mathbf{d}}(C) \cap C$ is a nondegenerate
line segment, we refer to it as a \emph{side} of $C$.

We say that a convex polygon (container) $D$ is \emph{parallel to} a
convex body $C$ when for every direction $\mathbf{d}\in \SH$ if
$\ell_\mathbf{d}(D)\cap D$ is a side of $D$,
then $\ell_\mathbf{d}(C)\cap C$ is also a side of $C$.
Figure~\ref{fig:squares}~(right) depicts a square $D$ parallel to a
convex hexagon $C$.
For example, every positive homothet of a convex polygon $P$ is parallel to $P$;
and all axis-aligned rectangles are parallel to each other.

\paragraph{Classification.}
We generalize the lower bound construction in Theorem~\ref{thm:disks}
to arbitrary convex bodies, $C$ and $D$, of bounded description complexity,
where $D$ is not parallel to $C$.

\begin{theorem}\label{thm:boundary}
Let $C$ and $D$ be two convex bodies of bounded description
complexity. For every packing $S$ of $n$ positive homothets of $C$ in $D$,
where each element of $S$ touches the boundary of $D$, we have
$\per(S) \leq \rho(C,D) \log n$, where $\rho(C,D)$ depends on $C$ and $D$.
Apart from the factor $\rho(C,D)$, this bound is the best possible
unless $D$ is a convex polygon parallel to $C$.
\end{theorem}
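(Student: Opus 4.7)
The upper bound claim is the content of Proposition~\ref{pro:boundary}, so the work lies in the matching lower bound: whenever $D$ is not a convex polygon parallel to $C$, for arbitrarily large $n$ I would exhibit a packing $S$ of $n$ positive homothets of $C$ in $D$, each touching $\partial D$, with $\per(S)=\Omega(\log n)$. The plan is to generalize the Ford-disk construction that gives the lower bound of Theorem~\ref{thm:disks}.

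First I would split into two cases driven by the hypothesis. Either (A) $D$ is a polygon with a side whose outward normal $\mathbf d$ satisfies $\ell_{\mathbf d}(C)\cap C=\{u\}$ for some single point $u\in\partial C$, or (B) $D$ is not a polygon and so $\partial D$ contains a strictly convex arc. In Case~(A) I choose coordinates so that the relevant side of $D$ lies on $[-a,a]\times\{0\}$ with $D$ locally in the upper half-plane. In Case~(B) I pick a point $p$ on the curved arc whose outward normal $\mathbf d$ satisfies $\ell_{\mathbf d}(C)\cap C=\{u\}$; this is possible because $C$ has at most countably many sides while $\mathbf d$ sweeps a full interval of directions as $p$ varies along the arc. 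After an isometry I place $p$ at the origin with the tangent line as the $x$-axis and $D$ locally above it, and I use bounded description complexity of $\partial D$ to write it locally as $y=h(x)$ with $h(0)=h'(0)=0$ and $h(x)=O(x^2)$ on $|x|\le a$.

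The construction mimics Ford disks. Fix small positive constants $a$ and $c$ depending only on $(C,D)$. For every coprime pair $(j,k)$ with $1\le k\le N$ and $0\le j\le k$, let $C_{j,k}$ be the homothet of $C$ of scaling factor $c/k^2$, translated so that the image of $u$ lies at the point $(x_{j,k},0)$ with $x_{j,k}=ja/k$. The Farey inequality $|x_{j_1,k_1}-x_{j_2,k_2}|\ge a/(k_1k_2)$ means any two such tangent points are separated by at least $(a/c)$ times the scale of the smaller homothet. A local analysis at $u$ (which is either a smooth algebraic point of $\partial C$ with positive curvature, or a vertex with positive exterior angle, both furnishing uniform bounds on the horizontal extent of $C_{j,k}$ at small heights) shows that two tangent homothets can overlap only when this ratio falls below a constant depending on the local shape of $\partial C$ at $u$; taking $c$ small enough yields pairwise disjointness. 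In Case~(B) I further translate each $C_{j,k}$ upward by $h(x_{j,k})=O((a/k)^2)$ so that it touches $\partial D$ rather than merely the tangent line; this quadratically small shift is absorbed by shrinking $c$ further, preserving both disjointness and containment in $D$ for $a$ small.

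Counting then completes the proof: the number of homothets is $n=\sum_{k=1}^N\phi(k)=\Theta(N^2)$, while
\[
\per(S) \;=\; c\,\per(C)\sum_{k=1}^N \frac{\phi(k)}{k^2} \;=\; \Theta(\log N) \;=\; \Theta(\log n),
\]
by the classical estimate $\sum_{k\le N}\phi(k)/k^2=(6/\pi^2)\log N+O(1)$. The main obstacle I anticipate is verifying disjointness for general convex $C$ without Descartes' circle theorem; my approach reduces it to the Farey gap inequality combined with the uniform local control on $\partial C$ at $u$ supplied by bounded description complexity, at the cost of a $(C,D)$-dependent constant $c$. A secondary technical point is ensuring, in Case~(B), that every $C_{j,k}$ actually touches $\partial D$ and still fits inside $D$; this is handled by the small vertical translation above, whose size is dominated by the horizontal Farey gaps.
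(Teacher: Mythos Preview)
Your Ford/Farey strategy is natural, but the disjointness step has a genuine gap. You assert that the contact point $u$ on $\partial C$ is ``either a smooth algebraic point of $\partial C$ with positive curvature, or a vertex with positive exterior angle.'' This dichotomy is incomplete: $u$ can be a smooth point at which the curvature vanishes, and in Case~(A) you have no freedom to avoid this since $u$ is dictated by the chosen side of $D$. Take $C=\{(x,y):x^{4}+y^{4}\le 1\}$ and $D$ an axis-aligned square; then $C$ has no sides at all (so $D$ is not parallel to $C$), yet the tangent point of $C$ in each of the four axis directions is one of $(0,\pm 1)$, $(\pm 1,0)$, and $\partial C$ is locally $y\asymp x^{4}$ there. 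With local model $y\asymp |x|^{\alpha}$ at $u$, the lower boundary of a scale-$\lambda$ homothet near its bottom point is $y\asymp(x-x_{j,k})^{\alpha}/\lambda^{\alpha-1}$. Taking $\lambda=c/k^{2}$ and comparing $C_{0,1}$ with $C_{1,k}$, at $x=a/k$ the large homothet's lower boundary sits at height $\asymp a^{\alpha}/(c^{\alpha-1}k^{\alpha})$ while the small one reaches up to height $\asymp c/k^{2}$; for $\alpha>2$ the latter eventually exceeds the former as $k\to\infty$ regardless of $c$, so the two homothets overlap. Rescaling to $c/k^{\beta}$ does not help either, since the divergence $\sum_{k}\varphi(k)/k^{\beta}\to\infty$ needed for $\per(S)=\Omega(\log n)$ forces $\beta=2$.

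There is also a smaller slip in Case~(B): the shift is $h(x_{j,k})=O(x_{j,k}^{2})$, not $O((a/k)^{2})$, since $x_{j,k}=ja/k$ ranges over all of $[0,a]$; and a bare vertical translation by $h(x_{j,k})$ does not place $C_{j,k}$ inside $D$, because the tangent to $\partial D$ at $(x_{j,k},h(x_{j,k}))$ has nonzero slope $h'(x_{j,k})$ while the bottom of $C_{j,k}$ is horizontal, so the homothet protrudes through $\partial D$ on one side.

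The paper avoids both difficulties by generalizing the \emph{explicit} hierarchical construction of Subsection~\ref{ssec:explicit} rather than the Ford disks. It fixes an arc $\gamma\subset\partial D$ and, for each tangent homothet $Q_{p}$, invokes a pair of polynomials $\alpha_{p}(x)=A|x-x_{p}|^{\kappa}+s_{p}(x-x_{p})$ and $\beta_{p}(x)=B|x-x_{p}|^{\kappa}+s_{p}(x-x_{p})$ separating $\gamma$ from $Q_{p}$; the exponent $\kappa$ absorbs whatever order of contact $\partial C$ (and $\partial D$) exhibit. One then recursively places homothets, shrinking by a fixed factor $\mu$ per level, into the nested interval systems $I_{k}(Q)$, obtaining $\Theta(\log n)$ levels each contributing $\Theta(1)$ to the total perimeter.
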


If $D$ is a convex polygon parallel to $C$, and every homothet of $C$
in a packing $S$ of $n$ homothets touches the boundary of $D$, then it
is not difficult to see that $\per(S)$ is bounded from above by an expression
independent of $n$.

\begin{proposition}\label{pro:parallel}
Let $C$ and $D$ be convex bodies such that $D$ is a convex polygon
parallel to $C$. Then every packing $S$ of $n$ positive homothets of
$C$ in $D$, where each element of $S$ touches the boundary of $D$, we
have $\per(S) \leq \rho(C,D)$, where $\rho(C,D)$ depends on $C$ and~$D$.
\end{proposition}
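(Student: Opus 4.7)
The plan is to partition $S$ into two groups according to how each homothet touches $\partial D$, and to bound the total perimeter of each group by a constant depending only on $C$ and $D$. Write $S = S_1 \cup S_2$, where $S_1$ consists of the homothets meeting $\partial D$ at some point in the relative interior of a side of $D$, and $S_2$ consists of the remaining ones, which touch $\partial D$ only at vertices of $D$.

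For $S_1$ I would argue as follows. If $C_i = t_i + \lambda_i C \in S_1$ meets a side $e \subset \ell_\mathbf{d}(D)$ at an interior point, then $\ell_\mathbf{d}(D)$ also supports $C_i$ from within $D$; since $D$ is parallel to $C$, the direction $\mathbf{d}$ supports $C$ along a full side $s_e$, so by scaling $C_i$ has a side of length $\lambda_i |s_e|$ lying on $e$. A short tangent-cone argument (using that $C_i$ and $C_j$ near an interior point of a shared side would have overlapping interiors) shows that the sides on $e$ contributed by distinct homothets have pairwise disjoint relative interiors, so $\sum_{C_i : e_i = e} \lambda_i |s_e| \le |e|$. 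Charging each $C_i \in S_1$ to one such side $e_i$ and summing,
\[
\per(S_1) = \per(C) \sum_{C_i \in S_1} \lambda_i \le \per(C)\sum_e \frac{|e|}{|s_e|},
\]
which depends only on $C$ and $D$.

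For $S_2$ the key observation is that if $C_i \in S_2$ touches $\partial D$ only at vertices and $v$ is such a vertex with interior angle $\alpha(v) < \pi$, then the tangent cone of $C_i$ at $v$ must lie inside the wedge of $D$ at $v$; hence $v$ corresponds to a corner of $C$ with angle at most $\alpha(v) < \pi$. The total angular defect of a planar convex body is at most $2\pi$, so $C$ admits only finitely many corners of angle at most $\alpha_{\max} := \max_v \alpha(v) < \pi$; let $\beta_{\min}(C,D)>0$ be the smallest such angle. Since the tangent cones of the homothets touching a fixed vertex $v$ have pairwise disjoint interiors inside the wedge of angle $\alpha(v)$ and each has angle at least $\beta_{\min}$, at most $\lceil \pi/\beta_{\min} \rceil$ homothets can touch $v$. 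As $D$ has finitely many vertices and $\lambda_i \le \diam(D)/\diam(C)$, we obtain $\per(S_2) \le \rho_2(C,D)$. Adding the two bounds yields the claimed estimate.

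The main obstacle is the $S_2$ analysis: without further hypotheses on $C$, one must rule out the possibility that infinitely many arbitrarily small homothets accumulate at a single vertex of $D$, and this is precisely where the angular defect bound is needed. The $S_1$ part follows more directly from the parallelism hypothesis, which forces touchings at interior points of a side to happen along full corresponding sides of the homothets.
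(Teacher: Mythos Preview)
Your argument is correct, and your $S_1$ bound is essentially the paper's proof, but the decomposition $S=S_1\cup S_2$ is unnecessary: under the parallelism hypothesis the class $S_2$ is always empty. Indeed, suppose $C_i$ touches $\partial D$ at a vertex $v$, and let $e\subset \ell_{\mathbf d}(D)$ be either side of $D$ incident to $v$. Since $C_i\subset D$ and $v\in C_i$, the line $\ell_{\mathbf d}(D)$ supports $C_i$ as well; by parallelism $\ell_{\mathbf d}(C)\cap C$ is a side of $C$, so $\ell_{\mathbf d}(C_i)\cap C_i$ is a nondegenerate segment. This segment lies in $\ell_{\mathbf d}(D)\cap D=e$ and contains $v$, hence it meets the relative interior of $e$. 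Thus every $C_i$ touching $\partial D$ already belongs to $S_1$.

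The paper's proof exploits this observation directly, without splitting into cases: for each $C_i$ it picks a side of $D$ supporting $C_i$, notes that this side contains a full side of $C_i$ of length $a_i$, and bounds $\per(C_i)\le \rho'(C)\,a_i$ where $\rho'(C)=\per(C)/(\text{shortest side of }C)$. Since the chosen sides of the $C_i$ on $\partial D$ have pairwise disjoint relative interiors (your tangent-cone remark), $\sum_i a_i\le \per(D)$ and hence $\per(S)\le \rho'(C)\,\per(D)$. Your $S_1$ estimate $\per(S_1)\le \per(C)\sum_e |e|/|s_e|$ is a slightly sharper version of the same computation. The angular-defect analysis you carry out for $S_2$ is sound on its own (the separating-line argument does give pairwise interior-disjoint tangent cones at $v$), but it is superfluous here; the ``obstacle'' you flag never actually arises once parallelism is used at the vertices of $D$.
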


\paragraph{Total distance form the boundary of a convex container.}
In the general case, when the homothets of $C$ can be in the interior
of the container $D$, we improve the dependence on $n$ of the general
bound Proposition~\ref{pro:general} by using the escape distance,
namely the total distance of the homothets of $C$ from the boundary of $D$.
Combining the bound in Proposition~\ref{pro:general}
with inequality~\eqref{pro:boundary} yields the following bound.

\begin{proposition}\label{pro:homothets}
For every pair of convex bodies, $C$ and $D$,
and every packing $S$ of $n$ positive homothets of $C$ in $D$,
we have $\per(S) \leq \rho(C,D) (\esc(S)+\log n)$, where $\rho(C,D)$
depends on $C$ and~$D$.
\end{proposition}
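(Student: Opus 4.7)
The plan is to split $S$ into two subsets according to the ratio of each body's escape distance to its diameter, and handle the two subsets by different mechanisms. Concretely, let $S_1 = \{C_i \in S : \esc(C_i) \leq \diam(C_i)\}$ and $S_2 = S \setminus S_1$.

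For the subset $S_2$ the bound is immediate. The standard convex-body estimate $\per(C_i) \leq \pi \diam(C_i)$ together with the defining inequality of $S_2$ gives $\per(C_i) < \pi \esc(C_i)$ for every $C_i \in S_2$, so summing yields $\per(S_2) \leq \pi \esc(S)$. It remains to bound $\per(S_1)$ by $O(\log n + \esc(S))$.

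For $S_1$ the plan is to reduce to Proposition~\ref{pro:boundary} by attaching to each $C_i$ a proxy homothet $\tilde C_i$ of $C$ contained in $D$ and touching $\partial D$, with two properties: (i) $\per(\tilde C_i) \geq \per(C_i) - c_1 \esc(C_i)$ for some $c_1 = c_1(C,D)$, and (ii) the family $\{\tilde C_i : C_i \in S_1\}$ is itself a packing in $D$. Granted such a construction, Proposition~\ref{pro:boundary} applied to $\{\tilde C_i\}$ gives $\sum \per(\tilde C_i) \leq \rho(C,D) \log n$, and then
\[
\per(S_1) \leq \sum_{C_i \in S_1} \per(\tilde C_i) + c_1 \esc(S_1) \leq \rho(C,D) \log n + c_1 \esc(S).
\]
Adding the bound on $\per(S_2)$ and absorbing constants into a new $\rho(C,D)$ yields the proposition.

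The hard part will be actually producing the proxies $\tilde C_i$. The natural candidate is to shrink $C_i$ by a factor $\mu_i = 1 - \Theta(\esc(C_i)/\diam(C_i))$ and then translate the shrunken copy so that it touches $\partial D$ at the point $q_i \in \partial D$ nearest to $C_i$. The threshold defining $S_1$ keeps $\mu_i$ bounded away from zero, and the estimate $\per(\tilde C_i) = \mu_i \per(C_i) \geq \per(C_i) - \pi \esc(C_i)$ follows at once from $\per(C_i) \leq \pi \diam(C_i)$, giving~(i). For~(ii), each $\tilde C_i$ can be placed inside the ``corridor'' $\conv(C_i \cup \{q_i\})$; the disjointness of the original $C_i$'s rules out most conflicts, and I expect the remaining corridor-versus-corridor conflicts to be resolved by a greedy ordering (processing bodies in decreasing $\diam(C_i)$) together with a mild perturbation of $\mu_i$ or of the translation direction, with the additional perimeter loss per body absorbed into the $O(\esc(C_i))$ budget. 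The shape dependence on how the homothet $C$ sits inside $D$ enters only through the constant $\rho(C,D)$.
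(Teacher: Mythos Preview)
Your split $S = S_1 \cup S_2$ and the bound $\per(S_2) \leq \pi\,\esc(S)$ are correct and match the paper's treatment of its set $S^{\rm in}=\{C_i:\per(C_i)\leq \esc(C_i)\}$ (same idea, slightly different threshold).

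The gap is in your handling of $S_1$. The proxy construction is left incomplete: you acknowledge that the corridors $\conv(C_i\cup\{q_i\})$ can conflict and offer only ``I expect \ldots\ a greedy ordering \ldots\ mild perturbation'' as a remedy. This is the crux of your argument, and it is not clear it can be carried out. For a concrete obstruction, take $\partial D$ locally straight and place a large body $C_j\in S_1$ whose nearest boundary point $q_j$ lies beyond a row of small bodies $C_i\in S_1$ that hug $\partial D$; the corridor of $C_j$ then sweeps across the very strip where the proxies $\tilde C_i$ must sit, and shrinking or nudging $\tilde C_j$ by $O(\esc(C_j))$ need not clear all of them. Making the $\tilde C_i$ pairwise interior-disjoint while keeping each inside $D$, tangent to $\partial D$, and of perimeter $\per(C_i)-O(\esc(C_i))$ is a genuine geometric lemma that you have not proved.

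More to the point, the proxy construction is unnecessary. The observation you are missing is that the volume argument behind Proposition~\ref{pro:boundary} never uses that the bodies \emph{touch} $\partial D$; it only uses that each $C_i$ in the $k$th perimeter class lies in the annulus $R_k$ of width $\per(D)/2^k$ around $\partial D$. For $C_i\in S_1$ one has
\[
\esc(C_i)+\diam(C_i)\;\leq\;\per(C_i)+\tfrac{1}{2}\per(C_i)\;\leq\;3\,\per(D)/2^{k},
\]
so every point of $C_i$ still lies in an annulus of width $3\,\per(D)/2^k$. The paper simply reruns the proof of Proposition~\ref{pro:boundary} with the ring width tripled, obtaining $\per(S^{\rm bd})=O(\per(D)\log n)$ directly---no proxies, no disjointness issue.
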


By Theorem~\ref{thm:boundary}, the logarithmic upper bound in terms of $n$ is
the best possible when $D$ is not parallel to $C$. When $D$ is a convex
polygon parallel to $C$, we derive the following upper bound for $\per(S)$,
which is also asymptotically tight in terms of $n$.

\begin{theorem}\label{thm:homothets}
Let $C$ and $D$ be two convex bodies such that $D$ is a
convex polygon parallel to $C$. For every packing $S$ of $n$ positive
homothets of $C$ in $D$, we have
$$\per(S) \leq \rho(C,D) \left(\per(D) +\esc(S)\right) \, \frac{\log n}{\log \log n},$$
where $\rho(C,D)$ depends on $C$ and $D$.
For every $n\geq 1$, there exists a packing $S$ of $n$ positive homothets of
$C$ in $D$ such that
$\per(S) \geq \sigma(C,D) \left(\per(D) +\esc(S)\right) \, \frac{\log n}{\log \log n},$
where $\sigma(C,D)$ depends on $C$ and $D$.
\end{theorem}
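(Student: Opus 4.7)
The plan is to partition the bodies in $S$ by perimeter into $K=\Theta(\log n/\log\log n)$ ``super-classes,'' each spanning a factor of $\log n$ in scale, and to prove a per-super-class bound of the form $\per(G_t) = O(\per(D) + \esc(G_t))$ where $\esc(G_t) = \sum_{C_i \in G_t} \esc(C_i)$. Summing the $K$ such bounds then yields $\per(S) = O(K\per(D) + \esc(S)) \leq O((\per(D) + \esc(S)) \log n/\log\log n)$. After normalizing $\per(D)=1$, an area argument as in Proposition~\ref{pro:general} restricts the scale range of the $n$ homothets to an $O(\log n)$-wide interval in log-scale, which I then divide into $K$ super-classes each of width $\lceil\log\log n\rceil$ (in log-scale, i.e., a factor of $\log n$ in scale).

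For the per-super-class bound I will use a charging scheme anchored on the parallel property. For each $C_i \in G_t$, the inner-parallel polygon $D^{(\esc(C_i))}$ obtained by shrinking $D$ inward by distance $\esc(C_i)$ is itself a convex polygon parallel to $C$ with perimeter at most $\per(D)$, and $C_i$ touches $\partial D^{(\esc(C_i))}$ along a side segment whose length is a constant multiple of $\per(C_i)$ (by the parallel property, as in the proof of Proposition~\ref{pro:parallel}). I would charge $\per(C_i)$ to this touching segment. The main obstacle is aggregating these charges: since bodies in $G_t$ touch \emph{different} inner-parallel polygons $D^{(\esc(C_i))}$, the charges land on different boundaries, and I will need to exploit the bounded scale variation within $G_t$ (a factor of $\log n$) together with the escape distances to combine them into a total bound of $O(\per(D) + \esc(G_t))$. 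I expect this to involve a further subdivision of each $G_t$ by escape distance (into $O(\log\log n)$ sub-strata) and a side-by-side accounting along each side of $D$.

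\textbf{Lower bound.} My plan is to construct a packing with $K=\Theta(\log n/\log\log n)$ scale tiers, with the $j$-th tier a sub-packing of $m_j$ homothets of $C$ placed in an annular band between the inner-parallel polygons $D^{(d_{j-1})}$ and $D^{(d_j)}$, so that each tier contributes $\Theta(\per(D))$ to the perimeter via a tight packing along its band (in the spirit of the extremal configurations realizing the upper bound of Proposition~\ref{pro:parallel}). Choosing the depths $d_j$ and counts $m_j$ so that the bands are pairwise disjoint, the homothets fit inside their bands, $\sum_j m_j = n$, and $\sum_j m_j d_j = O(\per(D))$, I will obtain $\per(S) = K\cdot\Theta(\per(D)) = \Theta(\per(D)\log n/\log\log n)$ together with $\per(D)+\esc(S) = \Theta(\per(D))$, which establishes the claimed lower bound. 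The main obstacle for the lower bound is tuning the tier parameters $(d_j, m_j)$ simultaneously: in particular, making the bands thick enough for the tier bodies to fit while keeping the weighted sum $\sum_j m_j d_j$ bounded forces a careful geometric choice in which the per-tier count grows rapidly as $d_j$ shrinks, and the number of tiers that this balance can sustain turns out (by a direct calculation) to be exactly $\Theta(\log n/\log\log n)$.
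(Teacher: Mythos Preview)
Your lower-bound plan is essentially the paper's construction: stack $K=\Theta(\log n/\log\log n)$ tiers of homothets at geometrically decreasing distances from one side of $D$, each tier contributing $\Theta(\per(D))$ to the perimeter and $\Theta(\per(D)/K)$ to the escape; the paper places all tiers above a single side rather than in full annular bands, but the arithmetic is identical.

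Your upper-bound plan, however, rests on a per-super-class bound $\per(G_t)=O(\per(D)+\esc(G_t))$ that is \emph{false} when the scale within $G_t$ is allowed to vary by a factor of $\log n$. In $D=[0,1]^2$, fix any $r\geq 2$ and, for $j=0,\ldots,J-1$, place a row of $r^{j}$ squares of side $s_j=\tfrac14 r^{-j}$ at height $d_j=\sum_{k>j}s_k$ above the bottom side (so tier $j$ sits just above tier $j{+}1$). All perimeters lie within a factor $r^{J-1}$, hence in a single super-class whenever $r^{J-1}\leq\log n$. One checks $\per(G_t)=J$ while $\esc(G_t)\leq J/(4(r-1))$, so $\per(G_t)\geq 4(r-1)\,\esc(G_t)$. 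Since the adversary chooses $r$, no constant $c=c(C,D)$ can satisfy $\per(G_t)\leq c(\per(D)+\esc(G_t))$: take $r=2c$ and $J=\Theta(\log\log n)$. Your fallback of $O(\log\log n)$ escape-strata inside each $G_t$ would only yield $\per(G_t)=O(\log\log n)(\per(D)+\esc(G_t))$, and summing over $K$ super-classes then gives $O(\per(D)\log n+\esc(S)\log\log n)$, off by a $\log\log n$ factor.

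The paper avoids size-stratification altogether. With $\lambda=2\lceil\log n/\log\log n\rceil$ it splits (per side $a$ of $D$) into $S_{\rm far}=\{C_i:\esc(C_i)\geq \rho_2|b_i|/\lambda\}$, where trivially $\per(C_i)=O(\lambda)\,\esc(C_i)$, and $S_{\rm close}$. The key observation for $S_{\rm close}$ is that whenever the projections of two close bodies onto $a$ overlap, the inner one must be smaller by a factor $\lambda$ (it has to fit into a gap of height $<\rho_2|b_i|/\lambda$). This forces the projection ``depth'' to decay so fast that $\sum_k|I_k|\leq(\lambda+1)|a|$, giving $\per(S_{\rm close})=O(\lambda\,\per(D))$. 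The parameter $\lambda$ in the close/far threshold is exactly what ties the escape term and the perimeter term together with the right $\log n/\log\log n$ factor; pre-sorting by size, as you propose, decouples them and loses precisely this factor.
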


\paragraph{Related Previous Work.}
We consider the total perimeter $\per(S)$ of a packing $S$ of $n$ homothets of
a convex body $C$ in a convex container $D$ in Euclidean plane.
Other variants have also been considered:
(1) If $S$ is a packing of $n$ \emph{arbitrary} convex bodies in $D$,
then it is easy to subdivide $D$ by $n-1$ near diameter segments
into $n$ convex bodies of total perimeter close to
$\per(D)+2(n-2)\diam(D)$. Glazaryn and Mori\'c~\cite{GM14} have recently
proved that this lower bound is the best possible when $D$ is a square or a triangle.
For an arbitrary convex body $D$, they prove an upper bound of
$\per(S) \leq 1.22\, \per(D) + 2(n-1) \diam(D)$.
(2) If all bodies in $S$ are congruent to a convex body $C$, then
$\per(S)=n \, \per(C)$, and bounding $\per(S)$ from above reduces to the
classical problem of determining the maximum number of
interior-disjoint congruent copies of $C$ that fit in $D$~\cite[Section~1.6]{BMP05}.

Considerations of the total surface area of a ball packing in $\RR^3$
also play an important role in a strong version of the Kepler
conjecture~\cite{Bez13,Hal13}.

\paragraph{Motivation.}
In the \emph{Euclidean Traveling Salesman Problem} (ETSP),
given a set $S$ of $n$ points in $\RR^d$, one wants to find
a closed polygonal chain (\emph{tour}) of minimum Euclidean length
whose vertex set is $S$. The Euclidean TSP is known to be NP-hard,
but it admits a PTAS in $\RR^d$, where $d\in \NN$ is constant~\cite{Ar98}.
In the \emph{TSP with Neighborhoods}
(TSPN), given a set of $n$ sets (neighborhoods) in $\RR^d$,
one wants to find a closed polygonal chain of minimum Euclidean
length that has a vertex in each neighborhood. The neighborhoods
are typically simple geometric objects (of bounded description complexity)
such as disks, rectangles, line segments, or lines. TSPN is know to be NP-hard;
and it admits a PTAS for certain types of neighborhoods~\cite{Mi10},
but is hard to approximate for others~\cite{BGK+05}.

For $n$ connected (possibly overlapping) neighborhoods in the plane,
TSPN can be approximated with ratio $O(\log n)$ by an algorithm of
Mata and Mitchell~\cite{MM95}. See also the survey by Bern and Eppstein~\cite{BE97}
for a short outline of this algorithm. At its core, the $O(\log{n})$-approximation
relies on the following early result by Levcopoulos and Lingas~\cite{LL84}:
every (simple) rectilinear polygon $P$ with $n$ vertices, $r$ of which are reflex,
can be partitioned into rectangles of total perimeter $O(\per(P) \log r)$
in $O(n \log n)$ time.

A natural approach for finding a solution to TSPN is the
following~\cite{DM03,DT13} (in particular, it achieves a
constant-ratio approximation for unit disks):
Given a set $S$ of $n$ neighborhoods, compute a maximal subset
$I \subseteq S$ of pairwise disjoint neighborhoods
(\ie, a packing), compute a good tour for $I$, and then augment it by
traversing the boundary of each set in $I$. Since each neighborhood in
$S \setminus I$ intersects some neighborhood in $I$, the augmented
tour visits all members of $S$. This approach is particularly
appealing since good approximation algorithms are often available for
pairwise disjoint neighborhoods~\cite{Mi10}. The bottleneck of this
approach is the length increase incurred by extending a tour of $I$ by
the total perimeter of the neighborhoods in $I$. An upper bound
$\per(I)=o(\OPT(I)\log n)$ would immediately imply an improved
$o(\log n)$-factor approximation ratio for TSPN.

Theorem~\ref{thm:boundary} shows that this approach cannot beat the
$O(\log n)$ approximation ratio for most types of neighborhoods
(\eg, circular disks). In the current formulation,
Proposition~\ref{pro:boundary} yields the upper bound $\per(I)=O(\log n)$
assuming a convex container, so in order to use this bound, a tour of
$I$ needs to be augmented into a convex partition; this may increase
the length by a $\Theta(\log n/\log \log n)$-factor in the worst
case~\cite{DT11,LL84}. For convex polygonal neighborhoods, the bound
$\per(I)=O(1)$ in Proposition~\ref{pro:parallel} is applicable after a
tour for $I$ has been augmented into a convex partition with
\emph{parallel} edges (\eg, this is possible for axis-aligned
rectangle neighborhoods, and an axis-aligned approximation of the
optimal tour for $I$). The convex partition of a polygon with $O(1)$
distinct orientations, however, may increase the length by a
$\Theta(\log n)$-factor in the worst case~\cite{LL84}.
Overall our results show that we cannot beat the current
$O(\log n)$ ratio for TSPN for any type of homothetic neighborhoods
if we start with an arbitrary independent set $I$ and an arbitrary
near-optimal tour for $I$.

\section{Preliminaries: A Few Easy Pieces}\label{sec:prelim}

\noindent{\bf Proof of Proposition~\ref{pro:general}.}
Let $\mu_i>0$ denote the homothety factor of $C_i$, \ie, $C_i =\mu_i C$,
for $i=1,\ldots,n$. Since $S$ is a packing we have
$\sum_{i=1}^n \mu_i^2 \area(C) \leq \area(D)$.
By the Cauchy-Schwarz inequality we have
$(\sum_{i=1}^n \mu_i)^2 \leq n \sum_{i=1}^n \mu_i^2$.
It follows that
\begin{align*}
\per(S) &= \sum_{i=1}^n \per(C_i) =
\per(C) \sum_{i=1}^n \mu_i \\
&\leq \per(C) \sqrt{n} \sqrt{\left(\sum_{i=1}^n \mu_i^2\right)}
\leq \per(C) \sqrt{\frac{\area(D)}{\area(C)}} \, \sqrt{n}.
\end{align*}
Set now $\rho(C,D):=\per(C) \sqrt{\area(D)/\area(C)}$, and the
proof of the upper bound is complete.

For the lower bound, consider two convex bodies, $C$ and $D$.
Let $U$ be a maximal axis-aligned square inscribed in $D$, and let
$\mu C$ be the largest positive homothet of $C$ that fits into $U$.
Note that $\mu=\mu(C,D)$ is a constant that depends on $C$ and $D$ only.
Subdivide $U$ into $\lceil \sqrt{n}\rceil^2$ congruent copies of the
square $\frac{1}{\lceil \sqrt{n}\rceil}U$. Let $S$ be
the packing of $n$ copies of $\frac{\mu}{\lceil \sqrt{n}\rceil}C$
(\ie, $n$ translates),
with at most one in each square $\frac{1}{\lceil \sqrt{n}\rceil}U$.
The total perimeter of the packing is
$\per(S)=n\cdot \frac{\mu}{\lceil \sqrt{n}\rceil}\per(C)=\Theta(\sqrt{n})$,
as claimed.
\endproof

\paragraph{Proof of Proposition~\ref{pro:boundary}.}
Let $S=\{C_1,\ldots , C_n\}$ be a packing of $n$ homothets of $C$ in $D$
where each element of $S$ touches the boundary of $D$.
Observe that $\per(C_i)\leq \per(D)$ for all $i=1,\ldots  ,n$.
Partition the elements of $S$ into subsets as follows.
For $k=1,\ldots , \lceil \log n\rceil$, let $S_k$ denote the set of
homothets $C_i$ such that $\per(D)/2^k<\per(C_i)\leq \per(D)/2^{k-1}$;
and let $S_0$ be the set of homothets $C_i$ of perimeter less than
$\per(D)/2^{\lceil \log n\rceil}$.
Then the sum of perimeters of the elements in $S_0$ is
$\per(S_0)\leq n \, \per(D)/2^{\lceil \log n\rceil} \leq \per(D)$,
since $S_0 \subseteq S$ contains at most $n$ elements altogether.

For $k=1,\ldots , \lceil \log n\rceil$, the diameter of each
$C_i\in S_k$ is bounded above by
\begin{equation} \label{E1}
\diam(C_i)<\per(C_i)/2\leq \per(D)/2^k.
\end{equation}
Consequently, every point of a body $C_i\in S_k$ lies at
distance at most $\per(D)/2^k$ from the boundary of $D$,
denoted $\partial D$. Let $R_k$ be the set of points in $D$ at
distance at most $\per(D)/2^k$ from $\partial D$. Then
\begin{equation} \label{E2}
\area(R_k)\leq \per(D) \, \frac{\per(D)}{2^k} = \frac{(\per(D))^2}{2^k}.
\end{equation}
Since $S$ consists of homothets, the area of any element $C_i \in S_k$ is
bounded from below by
\begin{equation} \label{E3}
\area(C_i) = \area(C) \left(\frac{\per(C_i)}{\per(C)} \right)^2
\geq \area(C) \left(\frac{\per(D)}{2^k \, \per(C)} \right)^2.
\end{equation}
By a volume argument, \eqref{E2} and~\eqref{E3} yield
\begin{equation*}
|S_k|\leq
\frac{\area(R_k)}{\min_{C_i\in S_k} \area(C_i)} \leq
\frac{(\per(D))^2/2^k}{\area(C)(\per(D))^2/(2^k \, \per(C))^2}=
\frac{(\per(C))^2}{\area(C)} \cdot 2^k.
\end{equation*}
Since for $C_i \in S_k$, $k=1,\ldots,\lceil \log n\rceil$,
we have $\per(C_i)\leq \per(D)/2^{k-1}$,  it follows that
$$ \per(S_k)\leq |S_k| \cdot \frac{\per(D)}{2^{k-1}} \leq
2 \, \frac{(\per(C))^2}{\area(C)} \, \per(D). $$

Hence the sum of perimeters of all elements in $S$ is bounded by
$$\per(S)=\sum_{k=0}^{\lceil \log n\rceil} \per(S_k) \leq
 \left(1+2 \, \frac{(\per(C))^2}{\area(C)} \lceil \log n\rceil
\right) \per(D), $$
as required.
\endproof

\paragraph{Proof of Proposition~\ref{pro:parallel}.}
Let $\rho'(C)$ denote the ratio between $\per(C)$ and the length of a
shortest side of $C$. Recall that each $C_i\in S$ touches the boundary
of polygon $D$. Since $D$ is parallel to $C$, the side of $D$ that supports
$C_i$ must contain a side of $C_i$. Let $a_i$ denote the length of this side.
$$ \per(S)=\sum_{i=1}^n \per(C_i) = \sum_{i=1}^n a_i \, \frac{\per(C_i)}{a_i}
\leq \rho'(C) \sum_{i=1}^n a_i \leq \rho'(C) \, \per(D). $$
Set now $\rho(C,D):=\rho'(C) \, \per(D)$, and the proof is complete.
\endproof

\paragraph{Proof of Proposition~\ref{pro:homothets}.}
The proof is similar to that of Proposition~\ref{pro:boundary} with
a few adjustments. Let $S=\{C_1,\ldots , C_n\}$ be a packing of $n$
homothets of $C$ in $D$. Note that $\per(C_i)\leq \per(D)$ for all $i=1,\ldots  ,n$.
Partition the elements of $S$ into subsets as follows. Let
$$S^{\rm in}=\{C_i\in S: \per(C_i)\leq \esc(C_i)\} \textup{ and }
S^{\rm bd} = S\setminus S^{\rm in}.$$
For $k=1,\ldots , \lceil \log n\rceil$, let $S_k$ denote the set of
homothets $C_i\in S^{\rm bd}$ such that $\per(D)/2^k<\per(C_i)\leq \per(D)/2^{k-1}$;
and let $S_0$ be the set of homothets $C_i\in S^{\rm bd}$ of perimeter at most
$\per(D)/2^{\lceil \log n\rceil}$.

The sum of perimeters of the elements in $S^{\rm in}$ is
$\per(S^{\rm in}) \leq \esc(S^{\rm in})\leq \esc(S)$.
We next consider the elements in $S^{\rm bd}$.
The sum of perimeters of the elements in $S_0$ is
$\per(S_0)\leq n \, \per(D)/2^{\lceil \log n\rceil} \leq \per(D)$,
since $S_0 \subseteq S$ contains at most $n$ elements altogether.

For $k=1,\ldots , \lceil \log n\rceil$, the diameter of each
$C_i\in S_k$ is bounded above by $\diam(C_i)<\per(C_i)/2 \leq \per(D)/2^k$.
Observe that every point of a body $C_i\in S_k$ lies at
distance at most
$\esc(C_i) + \diam(C_i) \leq \per(C_i) + \diam(C_i)
\leq 1.5 \, \per(C_i) \leq 3 \, \per(D)/2^k$ from $\partial D$.
Let now $R_k$ be the set of points in $D$ at
distance at most $3\, \per(D)/2^k$ from $\partial D$.
Then
$$ \area(R_k)\leq \per(D) \, \frac{3\per(D)}{2^k} =
\frac{3\, (\per(D))^2}{2^k}. $$
Analogously to the proof of Proposition~\ref{pro:boundary},
a volume argument yields
$$ |S_k| \leq 3\, \frac{(\per(C))^2}{\area(C)} \cdot 2^k. $$

It follows that
$$ \per(S_k)\leq |S_k| \cdot \frac{\per(D)}{2^{k-1}} \leq
6 \, \frac{(\per(C))^2}{\area(C)} \, \per(D). $$
Hence the sum of perimeters of all elements in $S$ is bounded by
$$ \per(S)\leq \esc(S)
+\left(1+6 \, \frac{(\per(C))^2}{\area(C)} \lceil \log n\rceil \right) \per(D), $$
as required.
\endproof

\section{Disks Touching the Boundary of a Square:
Proof of Theorem~\ref{thm:disks}}\label{sec:disks}

Let $S$ be a set of $n$ interior-disjoint disks
in the unit square $U=[0,1]^2$ that touch the boundary of $U$.
From Proposition~\ref{pro:boundary} we deduce the upper
bound $\per(S)=O(\log n)$, as required.
To prove the matching lower bound, it remains to construct a packing of $O(n)$ disks
in the unit square $U$ such that every disk touches the $x$-axis, and the sum of their
diameters is $\Omega(\log{n})$. We present two constructions attaining this bound:
(i) an \emph{explicit} construction in Subsection~\ref{ssec:explicit} which
will be generalized in Section~\ref{sec:boundary};
and (ii) a \emph{greedy} disk packing.

\subsection{An Explicit Construction}\label{ssec:explicit}

For convenience, we use the unit square $[-\frac{1}{2},\frac{1}{2}]\times[0,1]$
for our construction. To each disk we associate its vertical \emph{projection
interval} (on the $x$-axis). The algorithm greedily chooses disks of
monotonically decreasing radii such that (1) every diameter is $1/16^k$ for some
$k \in \NN$; and  (2) if the projection intervals of two disks overlap,
then one interval contains the other.

For $k=0,1,\ldots , \lfloor \log_{16} n\rfloor$,
denote by $S_k$ the set of disks of diameter $1/16^k$,
constructed by our algorithm. We recursively allocate
a set of intervals $X_k\subset [-\frac{1}{2},\frac{1}{2}]$ to $S_k$,
and then choose disks in $S_k$ such that their projection
intervals lie in $X_k$. Initially, $X_0=[-\frac{1}{2},\frac{1}{2}]$,
and $S_0$ contains the disk of diameter 1
inscribed in $[-\frac{1}{2},\frac{1}{2}]\times[0,1]$.
The length of each maximal interval $I\subseteq X_k$
will be a multiple of $1/16^k$, so $I$ can be
covered by projection intervals of interior-disjoint disks
of diameter $1/16^k$ touching the $x$-axis. Every interval
$I \subseteq X_k$ will have the property that any disk of diameter
$1/16^k$ whose projection interval is in $I$ is disjoint from any
(larger) disk in $S_j$, $j<k$.
\begin{figure}[htbp]
\centering
\includegraphics[width=.6\columnwidth]{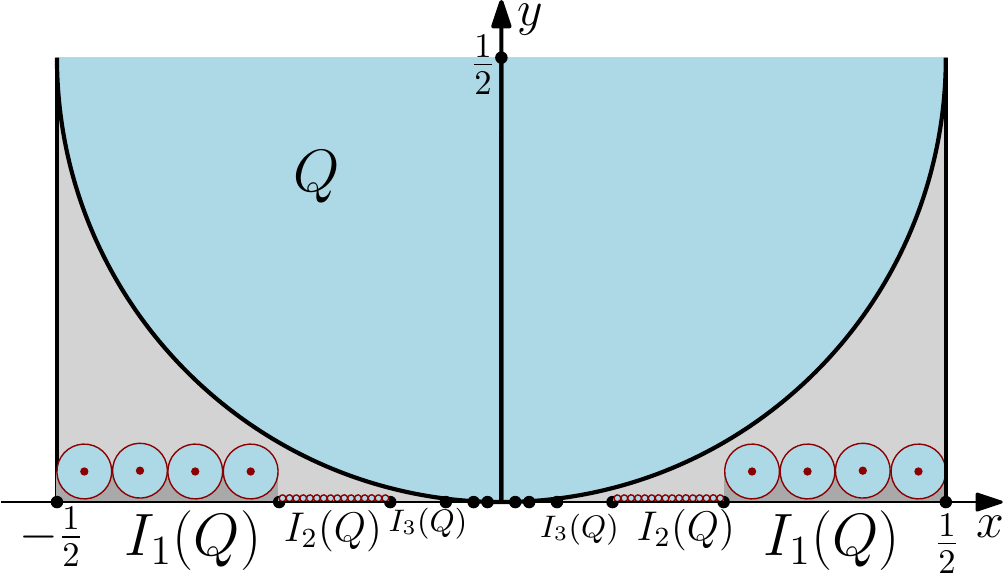}
\caption{Disk $Q$ and the exponentially decreasing pairs of
  intervals $I_k(Q)$, $k=1,2,\ldots$.}
\label{fig:disks}
\end{figure}

Consider the disk $Q$ of diameter 1, centered at $(0,\frac{1}{2})$, and tangent
to the $x$-axis (see Fig.~\ref{fig:disks}).
It can be easily verified that:
\begin{itemize}
\itemsep -2pt
\item[{\rm (i)}] the locus of centers of disks tangent to both $Q$ and the
$x$-axis is the parabola $y=\frac{1}{2}x^2$; and
\item[{\rm (ii)}] any disk of diameter $1/16$ and tangent to the
  $x$-axis whose projection interval is in
  $I_1(Q)=[-\frac{1}{2},-\frac{1}{4}] \cup [\frac{1}{4},\frac{1}{2}]$
is disjoint from $Q$.
\end{itemize}
Indeed, the center of any such disk is $(x_1,\frac{1}{16})$, for
$x_1\leq -\frac{5}{16}$ or $x_1\geq\frac{5}{16}$, and hence lies below
the parabola $y=\frac{1}{2}x^2$.
Similarly, for all $k\in \NN$, any disk of diameter $1/16^k$
and tangent to the $x$-axis whose projection interval is in
$I_k(Q)=[-\frac{1}{2^{k}},-\frac{1}{2^{k+1}}]
\cup [\frac{1}{2^{k+1}},\frac{1}{2^{k}}]$ is disjoint from $Q$.
For an arbitrary disk $D$ tangent to the $x$-axis, and an integer $k \geq
1$, denote by $I_k(D) \subseteq [-\frac{1}{2},\frac{1}{2}]$ the pair of
intervals corresponding to $I_k(Q)$; for $k=0$, $I_k(D)$ consists of only
one interval.

We can now recursively allocate intervals in $X_k$ and choose disks in $S_k$
($k=0,1,\ldots , \lfloor \log_{16} n\rfloor$) as follows. Recall that
$X_0=[-\frac{1}{2},\frac{1}{2}]$, and $S_0$ contains a single disk of unit
diameter inscribed in the unit square $[-\frac{1}{2},\frac{1}{2}]\times[0,1]$.
Assume that we have already defined the intervals in $X_{k-1}$, and
selected disks in $S_{k-1}$. Let $X_k$ be the union of the interval
pairs $I_{k-j}(D)$ for all $D \in S_j$ and $j=0,1,\ldots , k-1$.
Place the maximum number of disks of diameter $1/16^k$ into $S_k$ such that
their projection intervals are contained in $X_k$. For a disk $D\in S_j$
($j=0,1,\ldots , k-1$) of diameter $1/16^j$, the two intervals in
$X_{k-j}$ each have length $\frac{1}{2}\cdot \frac{1}{2^{k-j}}\cdot \frac{1}{16^j}
=\frac{8^{k-j}}{2}\cdot \frac{1}{16^k}$, so they can each accommodate
the projection intervals of $\frac{8^{k-j}}{2}$ disks in $S_k$.

We prove by induction on $k$ that the length of $X_k$ is $\frac{1}{2}$,
and so the sum of the  diameters of the disks in $S_k$ is $\frac{1}{2}$,
$k=1,2,\ldots, \lfloor \log_{16}{n}\rfloor$. The interval
$X_0=[-\frac{1}{2},\frac{1}{2}]$ has length 1. The pair of intervals
$X_1=[-\frac{1}{2},-\frac{1}{4}] \cup [\frac{1}{4},\frac{1}{2}]$ has
length $\frac{1}{2}$. For $k=2,\ldots, \lfloor \log_{16}n\rfloor$,
the set $X_k$ consists of two types of (disjoint) intervals:
(a) The pair of intervals $I_1(D)$ for every $D\in S_{k-1}$ covers half of
the projection interval of $D$. Over all $D\in S_{k-1}$, they jointly
cover half the length of $X_{k-1}$. (b) Each pair of intervals
$I_{k-j}(D)$ for $D\in S_{k-j}$, $j=0,\ldots ,k-2$,
has half the length of $I_{k-j-1}(D)$. So the sum of the lengths of
these intervals is half the length of $X_{k-1}$; although they are
disjoint from $X_{k-1}$.
Altogether, the sum of lengths of all intervals in $X_k$ is the same
as the length of $X_{k-1}$. By induction, the length of $X_{k-1}$
is $\frac{1}{2}$, hence the length of $X_k$ is also $\frac{1}{2}$,
as claimed. This immediately implies that the sum of diameters
of the disks in $\bigcup_{k=0}^{\lfloor \log_{16} n\rfloor} S_k$
is $1+\frac{1}{2}\lfloor \log_{16} n\rfloor$.
Finally, one can verify that the total number of disks used is $O(n)$.
Write $K=\lfloor \log_{16} n\rfloor$.
Indeed, $|S_0|=1$, and $|S_k|=|X_k|/16^{-k}=16^k/2$, for
$k=1,\ldots,K$, where $|X_k|$ denotes the
total length of the intervals in $X_k$. Consequently,
$|S_0|+ \sum_{k=1}^{K}  |S_k|= O(16^k)= O(n)$, as required.
\endproof

\subsection{A Greedy Disk Packing}\label{ssec:greedy}

The following simple greedy algorithm produces a packing $S_n$ of $n$ disks in
the unit square $U=[0,1]^2$ with all disks touching the boundary of $U$ and
whose total perimeter is $\Omega(\log{n})$. For $i=1$ to $n$, let
$C_i$ be a disk of maximum radius that lies in $U\setminus (\bigcup_{j<i}C_j)$
and intersects $\partial U$, and let $S_n=\{C_1,\ldots, C_n\}$;
refer to Fig.~\ref{fig:greedy}~(left). The radius of $C_1$ is $1/2$,
the radii of $C_2,\ldots, C_5$ are $3-2\sqrt{2}$, etc.
We use Apollonian circle packings~\cite{GLM+05} to derive the lower bound
$\per(S_n) =\Omega(\log{n})$.

We now consider a greedy algorithm in a slightly different setting,
applicable to Apollonian circles.
For $r_1,r_2>0$, we construct a set $F_n(r_1,r_2)$ of $n$ disks by the
following greedy algorithm. Let $A_1$ and $A_2$ be two tangent disks
of radii $r_1$ and $r_2$ that are also tangent to the $x$-axis from above.
Let $I$ be the horizontal segment between the tangency points of
$A_1$ and $A_2$ with the $x$-axis. For $i=3,\ldots,n$, let $A_i$ be the
disk of maximum radius tangent to segment~$I$,
lying above the $x$-axis, and disjoint from the interior of all
disks $A_j$, $j<i$. See Fig.~\ref{fig:greedy}~(right), where $r_1=r_2=1/2$.
We now compare the total perimeter of the two greedy disk packings
described above.
\begin{figure}[htbp]
\centering
\includegraphics[width=.8\textwidth]{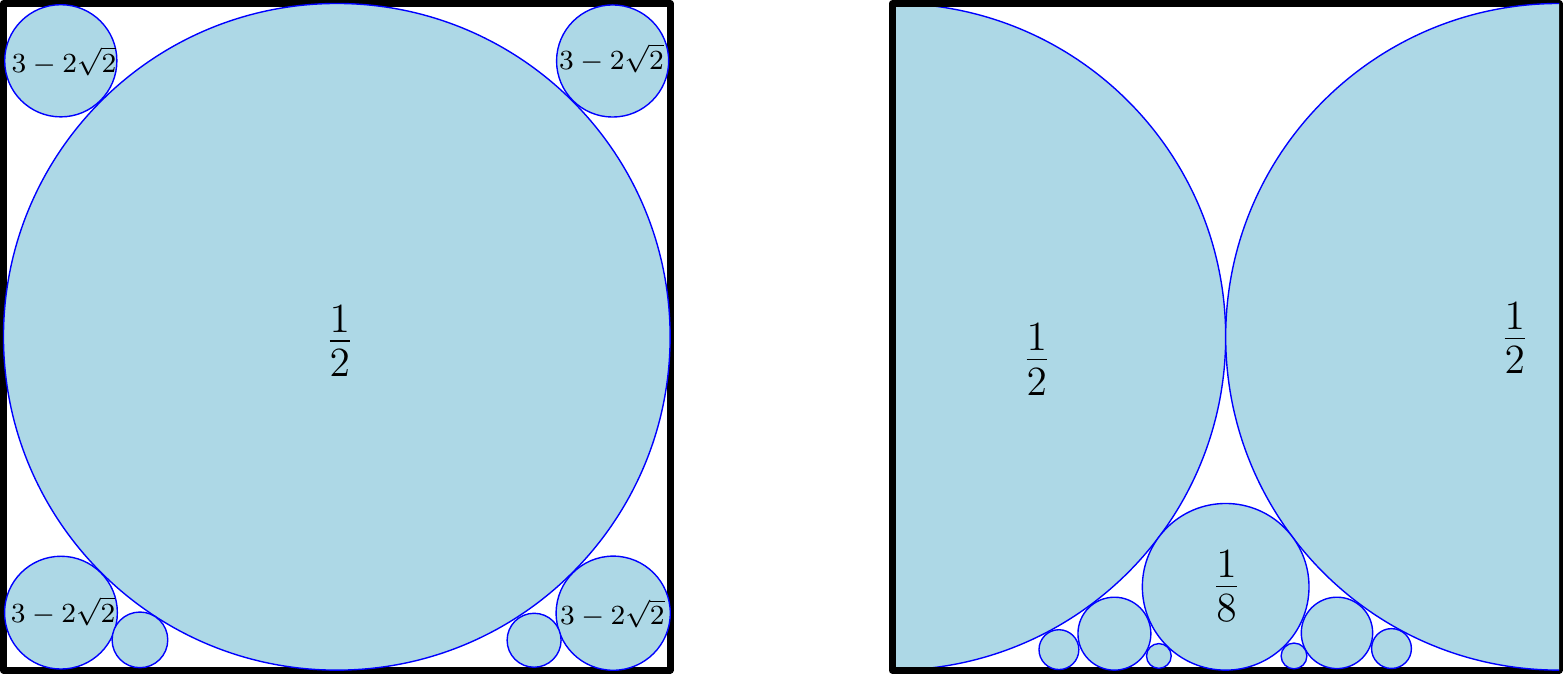}
\caption{Left: A greedy packing of $n=7$ disks in $[0,1]^2$.
Right: Ford disks visible in the window $[0,1]^2$.}
\label{fig:greedy}
\end{figure}

\begin{proposition}\label{pro:rarb}
$\per(S_n)\geq \per(F_n(1/2,3-2\sqrt{2})).$
\end{proposition}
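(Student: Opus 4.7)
The plan is to prove the disk-by-disk inequality $c_k\ge a_k$ for every $k\in\{1,\ldots,n\}$, where $c_1\ge\cdots\ge c_n$ are the radii of the disks of $S_n$ in decreasing order (which coincides with the greedy order of insertion) and $a_1\ge\cdots\ge a_n$ are the corresponding radii of $F_n(1/2,3-2\sqrt{2})$. Summing these inequalities and multiplying by $2\pi$ yields the proposition. The small-$k$ cases are direct: $c_1=1/2=a_1$ and $c_2=3-2\sqrt{2}=a_2$ because both greedy processes begin with the disk of maximum radius and then with the largest disk tangent to it and to the ``far'' bounding curve; and for $k\in\{3,4,5\}$ the greedy in $U$ places the three remaining congruent corner disks of radius $3-2\sqrt{2}$, so $c_k=3-2\sqrt{2}>a_3\ge a_k$ since $A_3$ is inscribed in the Pappus region bounded by $A_1$, $A_2$, and $I$ and hence has radius strictly less than $a_2$.

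For $k\ge 6$ I couple the two greedy processes through an isometry. Let $\Phi$ be the isometry that maps the curvilinear ``Pappus'' region $\Delta$ bounded by $A_1$, $A_2$, and the segment $I$ onto the region $P_L\subset U$ bounded by $C_1$, $C_2$, and the segment of the left side of $U$ running between the tangency points of $C_1$ and $C_2$ with that side; under $\Phi$ the disks $A_1,A_2$ go to $C_1,C_2$ and $I$ goes to that left-side segment. The key structural claim is a \emph{restriction lemma}: if $C_{j_1},C_{j_2},\ldots,C_{j_m}$ are the disks of $S_n$ that happen to lie in $P_L$, listed in the order in which they are inserted by the greedy algorithm, then $C_{j_l}=\Phi(A_{l+2})$ for every $l\in\{1,\ldots,m\}$. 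This I prove by induction on $l$: at step $j_l$ the global greedy chooses some disk in $P_L$, and because $P_L\cap\partial U$ equals precisely that left-side segment, any such disk must be the largest one in $P_L\setminus(C_{j_1}\cup\cdots\cup C_{j_{l-1}})$ tangent to the segment — which, under $\Phi^{-1}$, is exactly the next $F_n$-greedy pick $A_{l+2}$.

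Granting the restriction lemma, fix $k\ge 6$ and let $m$ be the number of indices $j<k$ with $C_j\in P_L$. Because $C_1,\ldots,C_5$ (the inscribed disk and the four corner disks) are all disjoint from $P_L$, we have $m\le k-6$, and in particular $m+3\le k$. The disk $\Phi(A_{m+3})$ lies in $P_L$ and is tangent to the left side of $U$, and it is disjoint from every $C_j$ with $j<k$: from $C_1,C_2$ because $A_{m+3}$ is disjoint from $A_1,A_2$ and $\Phi$ is an isometry; from $C_3,C_4,C_5$ because those disks sit in the other three corners and are separated from $P_L$ by $C_1$; from any later $C_j\notin P_L$ trivially; and from the $m$ earlier $C_j$'s inside $P_L$, which by the lemma are $\Phi(A_3),\ldots,\Phi(A_{m+2})$, by the disjointness of $A_{m+3}$ from $A_3,\ldots,A_{m+2}$ in $F_n$. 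Hence $\Phi(A_{m+3})$ is an eligible candidate for the $k$-th greedy step in $S_n$, so $c_k\ge r(\Phi(A_{m+3}))=a_{m+3}\ge a_k$ since the $a_i$ are nonincreasing.

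The chief obstacle is establishing the restriction lemma cleanly: one must certify that the disks $S_n$ inserts outside $P_L$ — whether in the other seven Pappus-type regions by the square's symmetry, or in the outer corner cusps that appear only after the corner disks have been placed — never interfere with the greedy comparison inside $P_L$, so that the global greedy, restricted to $P_L$, faithfully reproduces the $F_n$-process under $\Phi$. Once this is settled, the disk-by-disk bound $c_k\ge a_k$ follows immediately from the short candidate argument above.
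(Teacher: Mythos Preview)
Your approach is essentially the paper's: both arguments rest on the observation that the greedy in $U$, restricted to the curvilinear triangle between $C_1$, $C_2$, and a side of $U$, reproduces the $F_n$-process, so that every disk the greedy places \emph{outside} that region is at least as large as the next unused $F_n$-disk, yielding $c_k\ge a_k$. The paper is terser---it places $A_1=C_1$ and $A_2=C_2$ directly on the bottom side of $U$ (so your isometry $\Phi$ is the identity), asserts your restriction lemma in a single clause (``because of the greedy strategy''), and phrases the conclusion as a bijection $S_n\to F_n$ that never increases radii. The ``chief obstacle'' you flag is in fact immediate: the region $P_L$ is separated from the rest of $U\setminus(C_1\cup C_2)$ by arcs of $\partial C_1$ and $\partial C_2$, so any disk disjoint from $C_1,C_2$ lies entirely inside or entirely outside $P_L$, and outside disks therefore never constrain the greedy choice within $P_L$.
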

\begin{proof}
Recall that the first two disks disks in $S_n$ have radii $1/2$ and $3-2\sqrt{2}$.
Let $I$ be the line segment between the tangency points of $A_1$ and
$A_2$ with the bottom side of $[0,1]^2$. Because of the greedy
strategy, all disks in $S_n$ that intersect segment $I$ are in $F_n(1/2,3-2\sqrt{2})$.
The radius of every disk in $S_n\setminus F_n(1/2,3-2\sqrt{2})$ is at
least at large as any disk in $F_n(1/2,3-2\sqrt{2})\setminus S_n$.
Therefore, there is a one-to-one correspondence between $S_n$ and
$F_n(1/2,3-2\sqrt{2})$ such that each disk in $S_n$ corresponds to a
disk of the same or smaller radius in $F_n(1/2,3-2\sqrt{2})$.
\end{proof}

Given two tangent disks of radii $r_1$ and $r_2$ that are also tangent
to the $x$-axis, there is a unique disk tangent to both these disks
and the $x$-axis, and its radius $r_3$ satisfies $r_3^{-1/2} =
r_1^{-1/2} + r_2^{-1/2}$. Observe that $r_3=r_3(r_1,r_2)$ is a continuous and
monotonically increasing function of both variables, $r_1$ and $r_2$.
Therefore, if $r_1 \leq r'_1$ and $r_2 \leq r'_2$, then
\begin{equation}\label{eq:rarb}
\per(F_n(r_1,r_2)) \leq \per(F_n(r'_1,r'_2)).
\end{equation}
This observation allows us to bound $\per(S_n)$ from below by the perimeter
of a finite subfamily of Ford disks.

The \emph{Ford disks}~\cite{For38} are a packing of an \emph{infinite}
set of disks in the halfplane $\{(x,y)\in \mathbb{R}^2: y\geq 0\}$,
where each disk is tangent to the $x$-axis from above.
Every pair $(p,q)\in \mathbb{N}^2$ of relative prime positive integers
defines a Ford circle $C_{p,q}$ of radius $1/(2q^2)$ centered at
$(p/q,1/(2q^2))$; see Fig.~\ref{fig:greedy}~(right).
The Ford disks $C_{p,1}$ have the largest radius 1/2; all other Ford disks
have smaller radii and each is tangent to two larger Ford disks.
Hence, the set of the $n$ largest Ford disks that touch the unit
segment $[0,1]$ is exactly $F_n(1/2,1/2)$.

\begin{proposition}\label{pro:Ford}
$\per(F_n(1/2,1/2)) =\Omega(\log n)$.
\end{proposition}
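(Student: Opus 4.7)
The plan is to combine the explicit description of the Ford disks with a classical estimate from analytic number theory. Recall that the Ford disk $C_{p,q}$ with $\gcd(p,q)=1$ has diameter $1/q^2$, so sorting the Ford disks lying in the window $[0,1]$ by decreasing radius is the same as sorting them by increasing denominator $q$. The number of such Ford disks with a given denominator $q \geq 2$ equals Euler's totient $\phi(q)$, while $q = 1$ contributes two disks (centered above $0$ and $1$).

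First I would choose a cutoff denominator $Q = Q(n)$. By the classical estimate
\[ \Phi(Q) := \sum_{q=1}^{Q} \phi(q) = \frac{3}{\pi^2}\, Q^2 + O(Q \log Q), \]
the number of Ford disks in $[0,1]$ with denominator at most $Q$ is $\Theta(Q^2)$. Hence there is a constant $c>0$ such that, setting $Q := \lfloor c\sqrt{n}\rfloor$, all these disks lie among the $n$ largest Ford disks in $[0,1]$, and therefore belong to $F_n(1/2,1/2)$. It suffices to bound the total perimeter of this sub-collection from below.

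That total perimeter equals
\[ \pi \left( 2 + \sum_{q=2}^{Q} \frac{\phi(q)}{q^2} \right) \;\geq\; \pi \sum_{q=1}^{Q} \frac{\phi(q)}{q^2}, \]
so the whole argument reduces to the number-theoretic estimate $\sum_{q=1}^{Q} \phi(q)/q^2 = \Omega(\log Q)$. This is the main obstacle, but it is routine once $\Phi(Q)$ is known. I would derive it by Abel summation: with $g(q) = 1/q^2$ one has
\[ \sum_{q=1}^{Q} \frac{\phi(q)}{q^2} \;=\; \frac{\Phi(Q)}{Q^2} + \sum_{q=1}^{Q-1} \Phi(q)\left(\frac{1}{q^2} - \frac{1}{(q+1)^2}\right), \]
and since $\frac{1}{q^2} - \frac{1}{(q+1)^2} = \Theta(1/q^3)$ while $\Phi(q) = \Theta(q^2)$, each summand is $\Theta(1/q)$, so the sum is $\Theta(\log Q)$. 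Combining with $\log Q = \tfrac{1}{2}\log n - O(1)$ yields $\per(F_n(1/2,1/2)) = \Omega(\log n)$, as claimed. The one point that requires a little care is fixing $c$ small enough so that all disks with denominator at most $Q$ really do fit within the $n$ largest (which amounts to $1 + \Phi(Q) \leq n$ for large $n$); this is immediate from the asymptotic above.
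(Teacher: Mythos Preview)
Your proposal is correct and follows essentially the same route as the paper: both identify $F_n(1/2,1/2)$ with the Ford disks of denominator at most some $Q=\Theta(\sqrt{n})$, invoke the classical estimate $\sum_{q\le Q}\varphi(q)=\tfrac{3}{\pi^2}Q^2+O(Q\log Q)$, and reduce the perimeter bound to $\sum_{q\le Q}\varphi(q)/q^2=\Omega(\log Q)$. The only difference is cosmetic: the paper simply cites the estimate $\sum_{q\le Q}\varphi(q)/q^2=\tfrac{6}{\pi^2}\ln Q+O((\log Q)/Q)$ from Apostol, whereas you derive the needed $\Omega(\log Q)$ yourself via Abel summation from the first estimate---a perfectly standard and self-contained alternative.
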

\begin{proof}
For a positive integer $Q$, the number of Ford disks of radius at
least $\frac{1}{2Q^2}$ touching the unit segment $[0,1]$ is
$1+\sum_{q=1}^Q \varphi(q)$, where $\varphi(.)$ is Euler's totient function,
\ie, the number positive integers less than or equal to $q$ that are
relatively prime to $q$. It is known~\cite[Theorem 3.7, p.~62]{Ap76}
that
$$\sum_{q=1}^Q \varphi(q) = \frac{3}{\pi^2} Q^2 +O(Q \log Q). $$
Hence, for a suitably large $Q=\Theta(\sqrt{n})$, there exists exactly
$n$ Ford disks of radius at least $\frac{1}{2Q^2}$ that touch $[0,1]$.
Let $F_n(1/2,1/2)$ be the subset of these $n$ Ford disks.
Then we have
$$ \per(F_n)=\sum_{q=1}^Q \varphi(q) \cdot 2\pi \cdot \frac{1}{2q^2}
=\pi\sum_{q=1}^Q \frac{\varphi(q)}{q^2}.$$

It is also known~\cite[Exercise 6, p.~71]{Ap76} that
$$ \sum_{q=1}^Q \frac{\varphi(q)}{q^2} =
\frac{6}{\pi^2} \ln {Q} + O\left(\frac{\log{Q}}{Q}\right). $$

Using this estimate, we have
$$ \per(F_n) = \pi \left( \sum_{q=1}^Q \frac{\varphi(q)}{q^2}\right)
=\Omega(\log Q) =\Omega(\log \sqrt{n}) =\Omega(\log n), $$
as claimed.
\end{proof}

The bounds in Propositions~\ref{pro:rarb}-\ref{pro:Ford} in
conjunction with~\eqref{eq:rarb} yield
\begin{align*}
\per(S_n) &\geq \per(F_n(1/2,3-2\sqrt{2}))
\geq \per(F_n(3-2\sqrt{2},3-2\sqrt{2})) \\
&= \Omega(\per(F_n(1/2,1/2))) = \Omega(\log n).
\end{align*}

When $C$ is a disk and the container $D$ is any other convex body, the above
argument goes through and shows that a greedy packing $S_n$ has total
perimeter $\per(S)=\Omega(\log n)$, where the constant of proportionality
depends on $D$. However, when $C$ is not a circular
disk, the theory of Apollonian circles does not apply.

\section{Homothets Touching the Boundary: Proof of
  Theorem~\ref{thm:boundary}}\label{sec:boundary}

The upper bound $\per(S)=O(\log n)$ follows from Proposition~\ref{pro:boundary}.
It remains to construct a packing $S$ of perimeter $\per(S)=\Omega(\log n)$
for given $C$ and $D$. Let $C$ and $D$ be two convex bodies with bounded
description complexity. We wish to argue analogously to the case of disks
in a square. Therefore, we choose an arc $\gamma\subset \partial D$ that
is smooth and sufficiently ``flat,'' but contains no side parallel to a
corresponding side of $C$. Then we build a hierarchy of homothets of $C$
touching the arc $\gamma$, so that the depth of the hierarchy is $O(\log n)$,
and the homothety factors decrease by a constant between two consecutive levels.

\begin{figure}[htbp]
\centering
\includegraphics[width=.6\columnwidth]{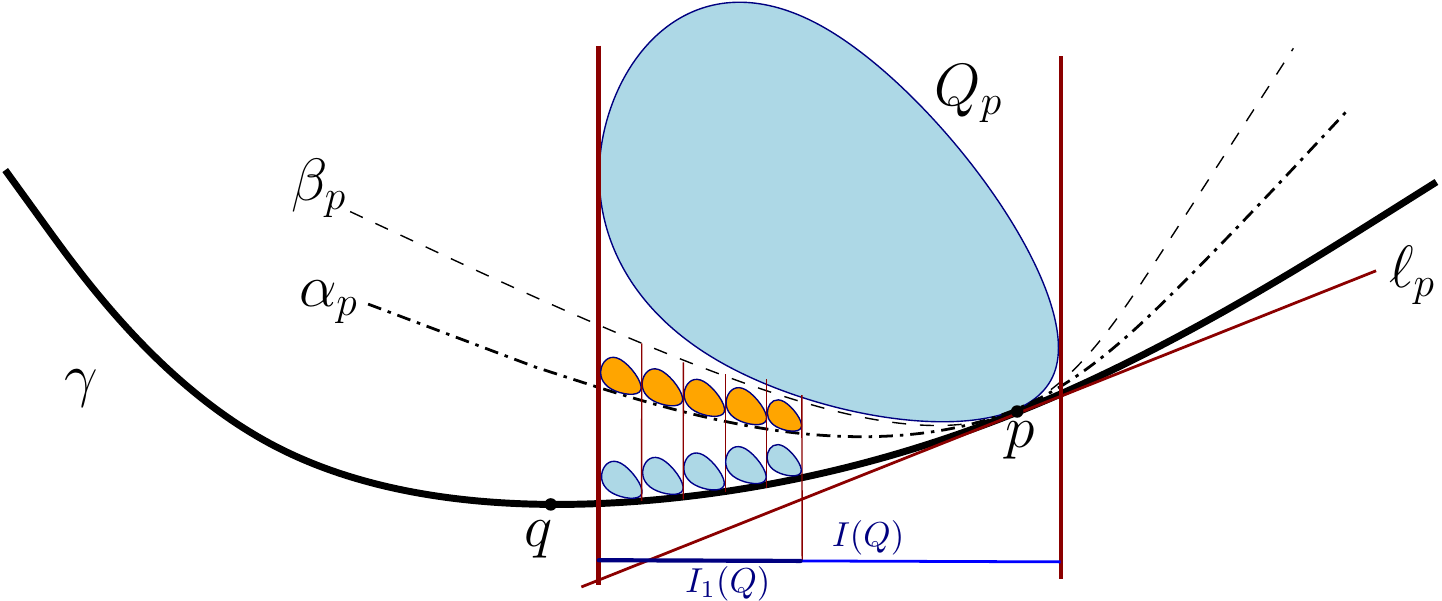}
\caption{If a homothet $C_p$ is tangent to $\gamma\subset \partial D$ at point $p$,
then there are polynomials $\alpha_p$ and $\beta_p$ that separate $\gamma$ from $C_p$.
We can place a constant number of congruent homothets of $C$ between
$\alpha_p$ and $\beta_p$ whose vertical projections cover
$I_1(Q)$. These homothets can be translated vertically down to touch $\gamma$.}
\label{fig:separate}
\end{figure}

We choose an arc $\gamma\subset \partial D$ as follows. If $D$ has a
side with some direction $\mathbf{d}\in \SH$ such that $C$ has no parallel
side of the same direction $\mathbf{d}$, then let $\gamma$ be this side of $D$.
Otherwise, $\partial D$ contains an algebraic curve $\gamma_1$ of degree
2 or higher. Let $q\in \gamma_1$ be an interior point of this curve such
that $\gamma_1$ is twice differentiable at $q$. Assume, after a rigid transformation
of $D$ if necessary, that $q=(0,0)$ is the origin and the supporting line of $D$
at $q$ is the $x$-axis. By the inverse function theorem, there is an arc
$\gamma_2\subseteq \gamma_1$, containing $q$, such that $\gamma_2$ is the graph
of a twice differentiable function of $x$. Finally, let $\gamma\subset \gamma_2$
be an arc such that the part of $\partial C$ that has the same tangent lines as
$\gamma_2$ contains no segments (sides).

For every point $p\in \gamma$, let $p=(x_p,y_p)$, and let $s_p$ be the slope of the
tangent line of $D$ at $p$. Then the tangent line of $D$ at $p\in \gamma$ is
$\ell_p(x)=s_p(x-x_p)$. For any homothet $Q$ of $C$, let $Q_p$ denote
a translate of $Q$ tangent to $\ell_p$ at point $p$ (Fig.~\ref{fig:separate}).
If both $C$ and $D$ have bounded description complexity, then there are
constants $\rho_0>0$, $\kappa, \in \NN$ and $A<B$, such that for every point
$p\in \gamma$ and every homothety factor $\rho$, $0<\rho<\rho_0$, the polynomials
$$
\alpha_p(x)=A|x-x_p|^\kappa+s_p(x-x_p),
\hspace{8mm}\mbox{ \rm and}\hspace{8mm}
\beta_p(x)=B|x-x_p|^\kappa+s_p(x-x_p),
$$
separate $\gamma$ from the convex body $Q_p=(\rho C)_p$.

Similarly to the proof of Theorem~\ref{thm:disks}, the construction is guided
by nested \emph{projection intervals}. Let $Q=(\rho C)_p$ be a homothet of $C$ that
lies in $D$ and is tangent to $\gamma$ at point $p\in \gamma$. Denote by
$I(Q)$ the vertical projection of $Q$ to the $x$-axis. For $k=1,\ldots $,
we recursively define disjoint intervals or interval pairs $I_k(Q)\subset I(Q)$
of length $|I_k(Q)|=|I(Q)|/2^k$. During the recursion, we maintain the
invariant that the set $J_k(Q)=I(Q)\setminus \bigcup_{j<k} I_{j}(Q)$
is an interval of length $|I(Q)|/2^{k-1}$ that contains $x_p$.
Assume that $I_1(Q),\ldots , I_{k-1}(Q)$ have been defined, and we need
to choose $I_k(Q)\subset J_k(Q)$.
If $x_p$ lies in the central one quarter of $J_k(Q)$,
then let $I_k(Q)$ be a pair of intervals that consists of the
left and right \emph{quarters} of $J_k(Q)$. If $x_p$ lies to
the left (right) of the central one quarter of $J_k(Q)$,
then let $I_k(Q)$ be the right (left) \emph{half} of
$J_k(Q)$. It is now an easy matter to check (by induction on $k$)
that $|x-x_p|\geq |I(Q)|/8^k$ for all $x\in I_k(Q)$.
Consequently,
\begin{equation}\label{eq:sep}
\beta_p(x)-\alpha_p(x)\geq (B-A) \cdot \left(\frac{|I(Q)|}{8^k}\right)^\kappa
\end{equation}
for all $x\in I_k(Q)$. There is a constant $\mu>0$ such that a
homothet $\mu^k Q$ with arbitrary projection interval in $I_k(Q)$ fits
between the curves $\alpha_p$ and $\beta_p$. Refer to
Fig.~\ref{fig:separate}. Therefore we can populate the region between
curves $\alpha_p$ and $\beta_p$ and above $I_k(Q)$ with homothets $\rho Q$,
of homothety factors $\mu^k/2<\rho \leq \mu^k$, such that their
projection intervals are pairwise disjoint and cover $I_k(Q)$. By
translating these homothets vertically until they touch $\gamma$,
they remain disjoint from $Q$ and preserve their projection intervals.
We can now repeat the construction of the previous section and
obtain $\lceil \log_{(2/\mu)} n\rceil$ layers of homothets touching $\gamma$,
such that the total length of the projections of the homothets in each layer
is $\Theta(1)$. Consequently, the total perimeter of the homothets in each layer
is $\Theta(1)$, and the overall perimeter of the packing is
$\Theta(\log n)$, as required.
\endproof

\section{Bounds in Term of the Escape Distance: Proof of Theorem~\ref{thm:homothets}}
\label{sec:escape}

\noindent{\bf Upper bound.} Let $S=\{C_1,\ldots,C_n\}$ be a packing of $n$
homothets of a convex body $C$ in a container $D$ such that $D$ is a convex
polygon parallel to $C$. For each element $C_i\in S$, $\esc(C_i)$ is the
distance between a side of $D$ and a corresponding side of $C_i$. For each
side $a$ of $D$, let $S_a\subseteq S$ denote the set of $C_i\in S$
for which $a$ is the closest side of $D$ (ties are broken arbitrarily).
Since $D$ has finitely many sides, it is enough to show that for each
side $a$ of $D$, we have
$$\per(S_a) \leq \rho_a(C,D) \left(\per(D) +\esc(S)\right) \,
\frac{\log |S_a|}{\log \log |S_a|}, $$
where $\rho_a(C,D)$ depends on $a$, $C$ and $D$ only.

Suppose that $S_a=\{C_1,\ldots , C_n\}$ is a packing of $n$ homothets of $C$
such that $\esc(C_i)$ equals the distance between $C_i$ and side $a$ of $D$.
Assume for convenience that $a$ is horizontal.
Let $c\subset \partial C$ be the side of $C$ corresponding to the side $a$ of $D$.
Let $\rho_1=\per(C)/|c|$, and then we can write $\per(C)=\rho_1|c|$. Refer to
Fig.~\ref{fig:parallel}~(left).

Denote by $b\subset c$ the line segment of length $|b|=|c|/2$ with the
same midpoint as $c$. Since $C$ is a convex body,
the two vertical lines though the two endpoints of $b$ intersect
$C$ in two line segments denoted $h_1$ and $h_2$, respectively. Let
$\rho_2=\min(|h_1|,|h_2|)/|b|$, and then $\min(|h_1|,|h_2|) = \rho_2|b|$.
By convexity, every vertical line that intersects segment $b$ intersects $C$
in a vertical segment of length at least $\rho_2|b|$.
Note that $\rho_1$ and $\rho_2$ are constants depending on $C$ and $D$.
For each homothet $C_i\in S_a$, let $b_i\subset \partial C_i$
be the homothetic copy of segment $b\subset \partial C$.

\begin{figure}[htbp]
\centering
\includegraphics[width=.6\columnwidth]{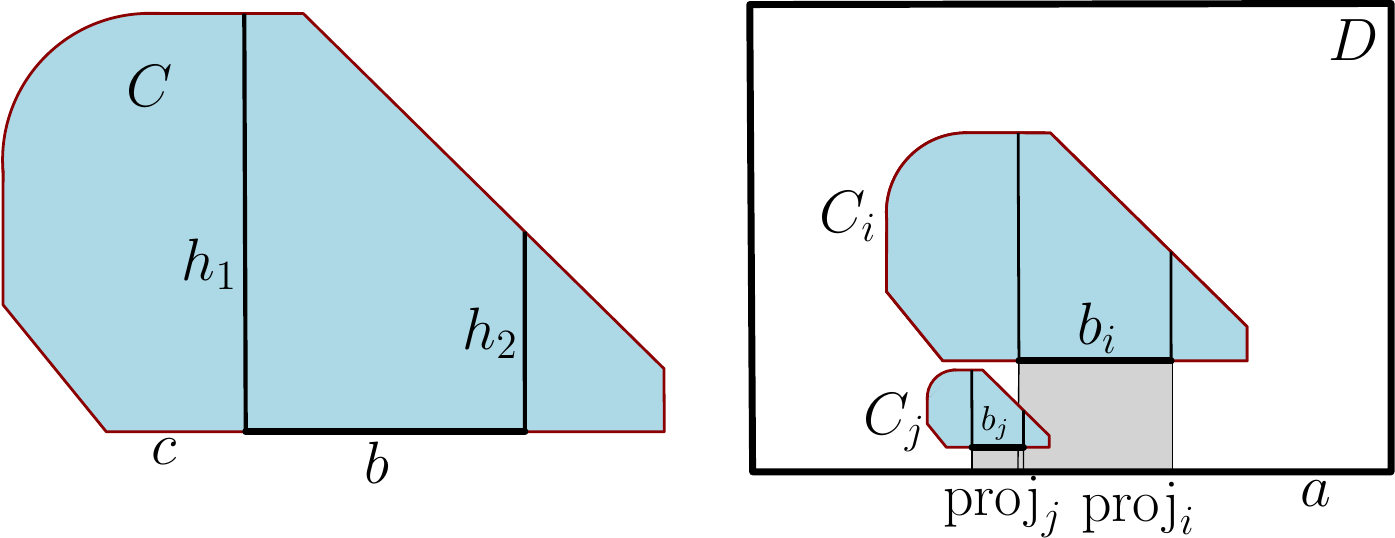}
\caption{Left: A convex body $C$ with a horizontal side $c$. The segment
$b\subset c$ has length $|b|=|c|/2$, and the vertical segments $h_1$ and $h_2$
are incident to the endpoints of $b$.
Right: Two homothets, $C_i$ and $C_j$, in a convex container $D$.
The vertical projections of $b_i$ and $b_j$ onto the horizontal
side $a$ are $\proj_i$ and $\proj_j$.}
\label{fig:parallel}
\end{figure}

Put $\lambda=2\lceil \log n/\log \log n\rceil$.
Partition $S_a$ into two subsets $S_a=S_{\rm far}\cup S_{\rm close}$ as follows.
For each $C_i\in S_a$, let $C_i \in S_{\rm close}$ if
$\esc(C_i)< \rho_2 |b_i|/\lambda$, and $C_i\in S_{\rm far}$ otherwise.
For each homothet $C_i\in S_{\rm close}$, let $\proj_i\subseteq a$
denote the vertical projection of segment $b_i$ onto the
horizontal side $a$ (refer to Fig.~\ref{fig:parallel}, right).
The perimeter of each $C_i\in S_a$ is
$\per(C_i)=\rho_1|c_i|=2\rho_1|b_i| =2\rho_1 |\proj_i|$.
We have
\begin{equation} \label{E4}
\per(S_{\rm far}) =
\sum_{C_i\in S_{\rm far}} \per(C_i) =
\sum_{C_i\in S_{\rm far}} 2\rho_1|b_i| \leq
\sum_{C_i\in S_{\rm far}} 2\rho_1 \frac{\esc(C_i) \, \lambda}{\rho_2} \leq
\frac{2\rho_1 \esc(S)}{\rho_2} \, \lambda.
\end{equation}

It remains the estimate $\per(S_{\rm close})$ as an expression of $\lambda$.
\begin{equation}\label{eq:close}
\sum_{C_i\in S_{\rm close}} \per(C_i)= 2\rho_1 \sum_{C_i\in S_{\rm close}} |\proj_i|.
\end{equation}

Define the \emph{depth} function for every point of the horizontal side $a$ by
$$d:a\rightarrow \NN, \hspace{7mm} d(x)=|\{C_i\in S_{\rm close}: x\in \proj_i\}|.$$
That is, $d(x)$ is the number of homothets such that the vertical projection of
segment $b_i$ contains point $x$. For every positive integer $k \in \NN$, let
$$I_k=\{x\in a: d(x)\geq k\},$$
that is, $I_k$ is the set of points of depth at least $k$. Since
$S_{\rm close}$ is finite, the set $I_k\subseteq a$ is measurable.
Denote by $|I_k|$ the measure (total length) of $I_k$.
By definition, we have $ |a| \geq |I_1| \geq |I_2| \geq \ldots .$
A standard double counting for the integral $\int_{x\in a} d(x) \ {\rm d}x$ yields
\begin{equation}\label{eq:double}
\sum_{C_i\in S_{\rm close}} |\proj_i| = \sum_{k=1}^\infty |I_k|.
\end{equation}

If $d(x)=k$ for some point $x\in a$, then $k$ segments $b_i$,
lie above $x$. Each $C_i\in S_{\rm close}$ is at distance
$\esc(C_i)< \rho_2 |b_i|/\lambda$ from $a$. Suppose that $\proj_i$ and
$\proj_j$ intersect for $C_i,C_j\in S_{\rm close}$ (Fig.~\ref{fig:parallel}, right).
Then one of them has to be closer to $a$ than the other:
we may assume w.l.o.g. $\esc(C_j)<\esc(C_i)$. Now a vertical segment
between $b_i \subset C_i$ and $\proj_i \subset a$ intersects $b_j$.
The length of this intersection segment satisfies
$\rho_2|b_j| \leq \esc(C_i)< \rho_2 |b_i|/\lambda$.
Consequently, $|b_j| < |b_i|/\lambda$ (or, equivalently,
$|\proj_j|<|\proj_i|/\lambda$) holds for any consecutive homothets
above point $x \in a$. In particular, for the $k$-th smallest
projection containing $x \in a$, we have
$|\proj_k|\leq |a|/\lambda^{k-1} = |a| \lambda^{1-k}$.

We claim that
\begin{equation}\label{eq:claim}
|I_k|\leq |a|\lambda^{\lambda-k} \hspace{7mm} \mbox{\rm for }  k\geq \lambda+1.
\end{equation}
Suppose, to the contrary,
that $|I_k|>|a|\lambda^{\lambda-k}$ for some $k\geq \lambda+1$.
Then there are homothets $C_i\in S_{\rm close}$ of side lengths at most
$|a|/\lambda^{k-1}$, that jointly project into $I_k$. Assuming that
$|I_k|>|a|\lambda^{\lambda-k}$, it follows that the number of these
homothets is at least
$$\frac{|a|\lambda^{\lambda-k}}{|a|\lambda^{1-k}}
= \lambda^{\lambda-1}=
\left(2\left\lceil \frac{\log n}{\log \log
  n}\right\rceil\right)^{2\lceil \frac{\log n}{\log \log n}  \rceil
  -1}>n, $$
contradicting the fact that $S_{\rm close}\subseteq S$ has at most $n$ elements.
Combining \eqref{eq:close}, \eqref{eq:double}, and \eqref{eq:claim}, we conclude that
\begin{align} \label{E5}
\per(S_{\rm close}) &=
2 \rho_1\sum_{k=1}^\infty |I_k|\leq
2 \rho_1 \left(\lambda |I_1|+\sum_{k=\lambda+1}^\infty |I_k| \right) \leq
2 \rho_1 \left(\lambda + \sum_{j=1}^\infty \frac{1}{\lambda^j}\right) |a| \nonumber \\
&\leq 2 \rho_1 (\lambda +1) \, \per(D).
\end{align}

Putting~\eqref{E4} and~\eqref{E5} together yields
\begin{align*}
\per(S_a) &= \per(S_{\rm close})+\per(S_{\rm far}) \leq
2 \rho_1 \left((\lambda +1) \, \per(D) + \frac{\esc(S)}{\rho_2} \, \lambda\right)\\
&\leq \rho(C,D) \left(\per(D) +\esc(S)\right) \, \lambda
= \rho(C,D) \left(\per(D) +\esc(S)\right) \, \frac{\log n}{\log \log n},
\end{align*}
for a suitable $\rho(C,D)$ depending on $C$ and $D$, as required;
here we set $\rho(C,D)=2 \rho_1 \max(2,1/\rho_2)$.

\paragraph{Lower bound for squares.}
We first confirm the given lower bound for squares, \ie,
we construct a packing $S$ of $O(n)$ axis-aligned squares
in the unit square $U=[0,1]^2$ with total perimeter
$\Omega((\per(U) +\esc(S)) \log n/\log \log n)$.

Let $n\geq 4$, and put $\lambda= \lfloor \log n/\log \log n\rfloor/2$.
We arrange each square $C_i\in S$ such that
$\per(C_i)=\lambda \, \esc(C_i)$.
We construct $S$ as the union of $\lambda$ subsets
$S=\bigcup_{j=1}^\lambda S_j$, where $S_j$ is a set
of congruent squares, at the same distance from the bottom side of
$U$.

Let $S_1$ be a singleton set consisting of one square
of side length $1/4$ (and perimeter 1) at distance $1/\lambda$
from the bottom side of $U$.
Let $S_2$ be a set of $2\lambda$ squares of side length $1/(4\cdot
2\lambda)$ (and perimeter $1/(2\lambda)$), each at distance
$1/(2\lambda^2)$ from the bottom side of $U$. Note that these squares
lie strictly below the first square in $S_1$, since
$1/(8\lambda)+1/(2\lambda^2)<1/\lambda$. The total length of the vertical
projections of the squares in $S_2$ is $2\lambda\cdot 1/(8\lambda)=1/4$.

Similarly, for $j=3\ldots ,\lambda$, let $S_j$ be a set of
$(2\lambda)^{j-1}$ squares of side length $\frac{1}{4\cdot
  (2\lambda)^{j-1}}$ (and perimeter $1/(2\lambda)^{j-1}$), each at
distance $1/(2^{j-1}\lambda^j)$ from the bottom side of $U$. These
squares lie strictly below any square in $S_{j-1}$; and the total
length of their vertical projections onto the $x$-axis is
$(2\lambda)^{j-1}\cdot \frac{1}{4\cdot (2\lambda)^{j-1}}=1/4$.

The number of squares in $S=\bigcup_{j=1}^\lambda S_j$ is
$$\sum_{j=1}^\lambda (2\lambda)^{j-1} = \Theta\left((2\lambda)^\lambda\right) = O(n).$$

The total distance from the squares to the boundary of $U$ is
$$ \esc(S)= \sum_{j=1}^\lambda (2\lambda)^{j-1}
\frac{1}{2^{j-1}\lambda^j}= \lambda \, \frac{1} {\lambda}=1. $$

The total perimeter of all squares in $S$ is
$$ 4\cdot \sum_{j=1}^\lambda \frac{1}{4} = \lambda=
\Omega\left(\frac{\log n}{\log\log n}\right)
=\Omega\left((\per(U)+\esc(S)) \, \frac{\log n}{\log\log n}\right), $$
as required.

\paragraph{General lower bound.}
We now establish the lower bound in the general setting.
Given a convex body $C$ and a convex polygon $D$ parallel to $C$,
we construct a packing $S$ of $O(n)$ positive homothets of $C$ in
$D$ with total perimeter $\Omega((\per(D) + \esc(S))\log n/\log \log n)$.

Let $a$ be an arbitrary side of $D$. Assume w.l.o.g. that $a$ is
horizontal. Let $U_C$ be the minimum axis-aligned square containing $C$.
Clearly, we have $\frac{1}{2}\, \per(U_C) \leq \per(C) \leq \per(U_C)$.
We first construct a packing $S_U$ of $O(n)$ axis-aligned squares
in $D$ 
such that for each square $U_i\in S_U$, $\esc(U_i)$ equals the distance
from the horizontal side $a$. We then obtain the packing $S$
by inscribing a homothet $C_i$ of $C$ in each square $U_i\in S_U$
such that $C_i$ touches the bottom side of $U_i$. Consequently, we have
$\per(S) \geq \per(S_U)/2$ and $\esc(S)=\esc(S_U)$, since
$\esc(C_i)=\esc(U_i)$ for each square $U_i\in S_U$.

It remains to construct the square packing $S_U$.
Let $U(a)$ be a maximal axis-aligned square contained in $D$ such that its bottom
side is contained in $a$. $S_U$ is a packing of squares in $U(a)$ that
is homothetic with the packing of squares in the unit square $U$
described previously. Put $\rho_1=\per(U(a))/\per(U)=\per(U(a))/4$.
We have
$ \per(S) \geq \frac{1}{4} \, \rho_1 \, \,
\Omega\left((\per(U)+\esc(S)) \, \frac{\log n}{\log\log n}\right),$
or
$$ \per(S) \geq \rho(C,D) \left((\per(D)+\esc(S)) \,
\frac{\log n}{\log\log n}\right),
$$
where $\rho(C,D)$ is a factor depending on $C$ and $D$, as required.
\endproof

\end{document}